\title{Cohomology of GKM-sheaves} 
\author{ Ibrahem Al-Jabea, Thomas John Baird}
\newtheorem{thm}{Theorem}[section]
\newtheorem{cor}[thm]{Corollary}
\newtheorem{lem}[thm]{Lemma}
\newtheorem{prop}[thm]{Proposition}
\newtheorem{claim}{Claim}[section]
\theoremstyle{definition}
\newtheorem{defn}{Definition}
\newcommand{\ignore}[1]{}
\newcommand{\lie}[1]{\mathfrak{#1}}
\newcommand{\Z}{\mathbb{Z}}
\newcommand{\R}{\mathbb{R}}
\newcommand{\C}{\mathbb{C}}
\newcommand{\F}{\mathcal{F}}
\newcommand{\E}{\mathcal{E}}
\newcommand{\V}{\mathcal{V}}
\begin{document}


\maketitle

\begin{abstract}
Let $T$ be a compact torus and $X$ be a a finite $T$-CW complex  (e.g. a compact $T$-manifold). In \cite{TB}, the second author introduced a functor which assigns to $X$ a so called GKM-sheaf $\F_X$ whose ring of global sections $H^0(\F_X)$ is isomorphic to the equivariant cohomology $H^*_T(X)$ whenever $X$ is equivariantly formal (meaning that $H^*_T(X)$ is  a free module over $H^*(BT))$. In the current paper we prove more generally that $H^0(\F_X) \cong H^*_T(X)$ if and only if $H_T^*(X)$ is reflexive, and find a geometric interpretation of the higher cohomology $H^n(\F_X)$ for $n \geq 1$. 
\end{abstract}

\section{Introduction}


Let $T=(\mathbb{S}^{1})^{r}$ be a compact torus Lie group and let $X$ be a finite $T$-CW complex (such as a compact, smooth $T$-manifold). GKM theory provides techniques for computing the equivariant cohomology ring  $H^{\ast}_{T}(X) := H^*(ET \times_T X; \C)$. For a large class of $T$-manifolds, now called GKM-manifolds, Goresky, Kottwitz and MacPherson \cite{GKM} showed that the cohomology ring $H^{\ast}_{T}(X)$ can be encoded combinatorially in a finite graph (the GKM-graph or moment graph) with edges labelled by non-trivial weights $\alpha \in \Lambda := Hom(T, \mathbb{S}^{1})$. GKM-theory has since developed in several directions: combinatorially by Guillemin and Zara \cite{GZ1,GZ2, GZ3}, to a broader range of spaces by Guillemin and Holm \cite{GH}, and to equivariant intersection cohomology by Braden and MacPherson \cite{BM} who introduced the notion of $\Gamma$-sheaves on a GKM-graph.

In \cite{TB} the second author introduced GKM-sheaves which provide a unified framework for the above constructions. Given a finite $T$-CW complex $X$, we associate a sheaf $\F_X$ whose ring of global sections $H^0(\mathcal F_X)$ is isomorphic to $ H_T^*(X)$ whenever $H_T^*(X)$ is a free module over the cohomology of a point. In the current paper we improve this result by proving that $H^0(\F_X) \cong H^*_T(X)$ if and only if $H_T^*(X)$ is a reflexive module (equivalently a 2-syzygy). Furthermore we show that $H^n(\F_X) = 0$ for $n \geq 2$ and that 

\begin{thm}\label{bigthm}
	If $X$ is a finite $T$-CW complex and $H_T^*(X)$ is reflexive, then there is a natural exact sequence 
	\begin{equation*}
	0\rightarrow H^{0}(\F_{X})\rightarrow H_T^*(X_0) \xrightarrow{\delta} H_T^{*+1}(X_1,X_0)\rightarrow H^{1}(\F_{X})\rightarrow 0
	\end{equation*} 
where $X_0=X^T$ is the fixed point set, $X_1$ is the union of all orbits of dimension one or less, and $\delta$ is the coboundary map in the long exact sequence of the pair $(X_1,X_0)$.
\end{thm}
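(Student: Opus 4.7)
The plan is to reduce the theorem to identifying, as graded cochain complexes, the Čech complex computing $H^*(\F_X)$ with the natural two-term complex $H_T^*(X_0) \xrightarrow{\delta} H_T^{*+1}(X_1, X_0)$ extracted from the long exact sequence of the pair $(X_1, X_0)$. Because $\F_X$ is a sheaf on a one-dimensional simplicial complex (the GKM graph, whose vertices correspond to components of $X_0$ and edges to components of $X_1 \setminus X_0$), its Čech complex has only two non-trivial terms, $0 \to \prod_v A_v \xrightarrow{d} \prod_e A_e \to 0$, so $H^0(\F_X) = \ker d$ and $H^1(\F_X) = \cok d$ (and $H^n(\F_X) = 0$ for $n \geq 2$, as already established in the paper). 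Once the two-term complexes are matched, the desired four-term exact sequence is simply the kernel--image--cokernel sequence of $d = \delta$.

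The identification of $C^0$ is immediate: the vertex stalks of $\F_X$ are $A_v = H_T^*(F_v)$ by construction, so additivity over components gives $\prod_v A_v = H_T^*(X_0)$. For the edge term I would first apply excision to write $H_T^*(X_1, X_0) \cong \bigoplus_e H_T^*(\overline{C_e}, \partial \overline{C_e})$, and then for each edge component $\overline{C_e}$ use that its principal isotropy is a codimension-one subtorus of weight $\alpha_e$, together with equivariant formality of $\overline{C_e}$, to conclude that $H_T^*(\overline{C_e}) \to H_T^*(\partial \overline{C_e})$ is injective. The local long exact sequence of the pair then collapses to $H_T^{n+1}(\overline{C_e}, \partial \overline{C_e}) \cong \cok\bigl(H_T^n(\overline{C_e}) \to H_T^n(\partial \overline{C_e})\bigr)$, and a direct GKM-style computation identifies this cokernel with the edge stalk $A_e$ in degree $n$ (in the isolated-fixed-point case just $H^*(BT)/(\alpha_e)$). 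Summing over edges yields $C^1(\F_X) \cong H_T^{*+1}(X_1, X_0)$, with the one-degree shift precisely the shift produced by the connecting homomorphism.

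Finally, to verify that $d$ agrees with $\delta$ under these identifications, I would check edge-by-edge naturality of the connecting homomorphism restricted to each $\overline{C_e}$: the Čech differential $(x_v)_v \mapsto (x_{v_1(e)}|_e - x_{v_2(e)}|_e)_e$ matches $\delta$ essentially by definition once the edge stalk is identified with the local cokernel above. The main obstacle will be making the edge identifications canonical enough to globalize, particularly when the fixed components $F_{v_i(e)}$ are positive-dimensional so that $\overline{C_e}$ is an equivariant $S^2$-bundle rather than a single sphere; here one needs an equivariant Thom or Leray--Hirsch argument to identify the local cokernel with the abstract edge stalk $A_e$ functorially and to confirm that the global differentials match in the correct grading.
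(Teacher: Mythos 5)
Your overall strategy coincides with the paper's: realize $H^0(\F_X)$ and $H^1(\F_X)$ as the kernel and cokernel of a two-term complex whose terms are identified with $H_T^*(X_0)$ and $H_T^{*+1}(X_1,X_0)$ and whose differential is $\delta$. (The paper packages that two-term complex via the local cohomology sequence $0\to H^0(\F)\to H^0(\V,\F)\to H^1_{\E}(\Top(\Gamma),\F)\to H^1(\F)\to 0$ with $H^1_{\E}\cong\bigoplus_e \cok(res_e)$. Your ``\v{C}ech complex'' $\prod_v A_v\to\prod_e A_e$ only computes the right groups if $A_e$ is taken to be $\cok(res_e)$ rather than the edge stalk $\F_e=H_T^*(e)/t$ --- recall that the restriction maps of $\F_X$ go from hyperedges to vertices, and a hyperedge may be incident to more than two vertices, so the differential $x_{v_1(e)}|_e-x_{v_2(e)}|_e$ does not literally make sense --- but this is a repairable bookkeeping issue.)

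The genuine gap is that you never use the hypothesis. Reflexivity implies $H_T^*(X)$ is torsion free, and this global input is needed in three places where you instead assert local facts without justification. First, your appeal to ``equivariant formality of $\overline{C_e}$'' to get injectivity of $H_T^*(e)\to H_T^*(e^T)$ is neither a hypothesis nor automatic; the paper derives it from Lemma \ref{lemma local} (torsion freeness of $H_T^*(X)$ forces $H_T^*(X^H)$ to be free for every codimension-one subtorus $H$) combined with the Localization Theorem. Second, the same input is needed to know that the stalk $\F_X(U_e)=H_T^*(e)/t$ actually equals $H_T^*(e)$; without it, $\cok(res_e)$ is not $H_T^{*+1}(e,e^T)$. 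Third, your excision decomposition $H_T^*(X_1,X_0)\cong\bigoplus_e H_T^*(\overline{C_e},\partial\overline{C_e})$ silently discards the components of $X_1$ that do not meet $X_0$; these contribute an extra summand $H_T^*(X_1')$ (Lemma \ref{decompose lemma}), and one must prove $X_1'=\emptyset$, which again uses torsion freeness and Localization (Lemma \ref{phi}). By contrast, the ``main obstacle'' you anticipate --- an equivariant Thom or Leray--Hirsch argument for $S^2$-bundles --- is not needed: once injectivity is established, the long exact sequence of the pair $(e,e^T)$ already identifies the edge term, with the degree shift coming from the connecting map.
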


\textbf{Outline:} In \S \ref{Equivariant Cohomology}, \S \ref{GKM-Sheaves}, and \S \ref{SCGR} we review background material on equivariant cohomology, GKM sheaves, and sheaf cohomology respectively.  In \S \ref{Cohomology of GKM-Sheaves} we study the cohomology of GKM-sheaves and prove that $H^{n}(\F)=0$ for $n\geq 0$ and produce chain complexes to calculate $H^{1}(\F).$ In \S \ref{GMofC} we study the cohomology of the GKM-sheaf $\F_X$ associated to  $T$-space and interpret it geometrically.

\textbf{Acknowledgements:} This paper is based on the master's thesis of the first author supervised by the second author. We thank thesis examiners Misha Kotchetov and Eduardo Martinez-Pedroza for helpful comments.

\section{Equivariant Cohomology}\label{Equivariant Cohomology}

Let $T \cong (\mathbb{S}^{1})^r$ be a compact torus Lie group. The \textit{universal $T$-bundle}, $ET\rightarrow BT$ is a principal bundle for which $ET$ is contractible and whose base $BT$ is homotopy equivalent to the $r$-fold product space $\C P^{\infty} \times ... \times \C P^{\infty}$. Given a $T$-space $X$, the \textit{Borel homotopy quotient} $X_{hT} := ET \times_T X$ is the total space of the associated fibre bundle 
\begin{equation}\label{ETfib}
 X \xrightarrow{i} ET \times_T X \xrightarrow{\pi} BT.
 \end{equation}
The \textit{equivariant cohomology} of $X$ is the singular cohomology of the homotopy quotient
$$ H_T^*(X) := H^*( X_{hT};\C). $$
We use complex coefficients throughout. More generally, if $Y \subseteq X$ is a $T$-invariant subspace then 
$$ H^{*}_{T}(X,Y)=H^{*}(X_{hT}, Y_{hT}). $$
Given a $T$-space $X$, the constant map to a point $X\rightarrow pt$ is equivariant. The induced morphism $H^*(pt)\rightarrow H_{T}^*(X)$ makes $H_{T}^*(X)$ an algebra over $H^*_T(pt) = H^*(BT)$.  By the Kunneth formula
$$H^*(BT)=H^*(\C P^{\infty} \times ... \times \C P^{\infty}) \cong \mathbb{C}[x_{1},\cdots,x_{r}]$$ where each class $x_{i}$ has degree two. More invariantly, there is a natural isomorphism between $H^*(BT)$ and the ring of complex valued polynomials functions on the Lie algebra $\lie{t}$
$$H^*(BT) \cong \C[\lie{t}]. $$
The \textit{weight lattice} 
\begin{equation}\label{weight}
\Lambda := \{ \alpha: T \rightarrow \mathbb{S}^{1}\}
\end{equation} 
is the set of Lie group homomorphisms from $T$ to $\mathbb{S}^{1}=U(1)$. It forms a group under multiplication and there is a natural injection  
$$ \Lambda \hookrightarrow H^2(BT)$$  
which sends $\alpha \in \Lambda$ to the tangent map $d\alpha : \lie{t} \rightarrow \lie{u}(1) = i\R$ regarded as a homogeneous linear polynomial in $\C[\lie{t}]$. 

The Borel Localization Theorem is central to GKM theory. We require only the following basic version (see \cite{FP}).

\begin{thm}[Localization Theorem]\label{locthm}
Let $X$ be a finite $T$-CW complex with fixed point set $i: X^T \hookrightarrow X$. Then the kernel and cokernel of $i^*: H_T^*(X) \rightarrow H_T^*(X^T)$ are both torsion $H^*(BT)$-modules. In particular if $H_T^*(X)$ is torsion free then $i^*$ is injective.
\end{thm}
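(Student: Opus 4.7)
The plan is to show that both the kernel and cokernel of $i^*$ are torsion $H^*(BT)$-modules by proving the single statement that the relative module $H_T^*(X, X^T)$ is torsion. This suffices: the long exact sequence of the pair $(X, X^T)$ exhibits $\ker(i^*)$ as a quotient of $H_T^*(X, X^T)$ and $\mathrm{coker}(i^*)$ as a submodule of $H_T^{*+1}(X, X^T)$, and subquotients of torsion modules are torsion. The final clause on injectivity then follows at once, since a torsion submodule of a torsion-free module over the integral domain $H^*(BT) \cong \C[x_1,\ldots,x_r]$ must be zero.

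The local computation at the heart of the argument is that for every proper closed subgroup $H \subsetneq T$, the module $H_T^*(T/H) \cong H^*(BH)$ is $H^*(BT)$-torsion. Any closed subgroup of a torus decomposes as $H \cong (\mathbb{S}^1)^s \times F$ with $F$ a finite abelian group, so over $\C$ we have $H^*(BH) \cong H^*(BH^0) \cong \C[y_1,\ldots,y_s]$. Because $H$ is proper, $s < r$, and the $H^*(BT)$-module structure on $H^*(BH)$ is the pullback along $BH \to BT$, which in degree two is the restriction map $\lie{t}^* \twoheadrightarrow \lie{h}^*$; any nonzero linear form $\alpha \in \lie{t}^*$ vanishing on $\lie{h}$ gives a nonzero element of $H^2(BT)$ that annihilates $H^*(BH)$ entirely. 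Combined with the suspension isomorphism $H_T^{*+n}(T/H \times D^n, T/H \times S^{n-1}) \cong H^*(BH)$, every non-fixed cell contributes a torsion piece to relative cohomology.

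I would then induct on a $T$-CW filtration $X^T = X_{(0)} \subseteq X_{(1)} \subseteq \cdots \subseteq X_{(N)} = X$ in which each $X_{(k+1)}$ is built from $X_{(k)}$ by attaching a single non-fixed cell. The long exact sequence of the triple $(X_{(k+1)}, X_{(k)}, X^T)$ exhibits $H_T^*(X_{(k+1)}, X^T)$ as an extension of a submodule of $H_T^*(X_{(k)}, X^T)$ by a quotient of $H_T^*(X_{(k+1)}, X_{(k)})$. Both flanking modules are torsion (the latter by the local computation, the former by the induction hypothesis), and since extensions of torsion modules over the domain $H^*(BT)$ are torsion, the induction closes after $N$ steps.

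The main technical subtlety I anticipate is handling disconnected isotropy in the local computation: namely, reducing from a general proper closed subgroup $H$ to its identity component so that the annihilating character $\alpha$ is produced by transparent linear algebra on $\lie{t}$. The cleanest route uses that the finite covering $BH^0 \to BH$ induces an isomorphism $H^*(BH;\C) \xrightarrow{\cong} H^*(BH^0;\C)^{H/H^0}$ via the transfer, because $|H/H^0|$ is invertible in $\C$, so $\alpha$ may indeed be chosen from $\ker(\lie{t}^* \twoheadrightarrow \lie{h}^*)$ and will act as zero on all of $H^*(BH)$.
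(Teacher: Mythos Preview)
Your argument is correct and is the standard proof of Borel localization. The paper, however, does not prove this theorem at all: it is quoted as background material with a citation to \cite{FP}, so there is no in-paper proof to compare against. Your strategy---show that $H_T^*(X,X^T)$ is torsion by filtering $X$ through its non-fixed $T$-cells and reducing to the local fact that $H^*(BH)$ is $H^*(BT)$-torsion for every proper closed subgroup $H\leq T$, then read off the kernel and cokernel of $i^*$ from the long exact sequence of the pair---is exactly the classical argument one finds in the standard references.
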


  \subsection{Atiyah-Bredon Sequence}
  \label{sec:syzygy}
 Let $R=H^{\ast}(BT) \cong \mathbb{C}[x_{1},\cdots,x_{r}]$. A finitely generated $R$-module $M$ is said to be a $j$-th syzygy if there exists an exact sequence
  $$0\rightarrow M\rightarrow F^{1}\rightarrow F^{2}\rightarrow \cdots F^{j}$$
  where the $\{F^{i}\}_{i \in\{1,..,j\}}$ are finitely generated free $R$-modules.  According to (\cite{AFP} Prop. 2.3):
  \begin{itemize}
  \item $M$ is an $r$-syzygy if and only if $M$ is free,
 \item  $M$ is a 1-syzygy if and only if $M$ is torsion free,
  \item $M$ is a 2-syzygy if and only if $M$ is reflexive, meaning the natural map
  	\begin{equation*}
  	M\rightarrow Hom_{R}(Hom_{R}(M,R),R)
  	\end{equation*}
  	is an isomorphism. 
 \end{itemize}

Given a $T$-space $X$, define $X_{i}$ to be the union of all orbits of dimension less than or equal to $i$, $$X_{i}:=\{x\in X \,|\, dim(T \cdot x)\leq i\}.$$ We call $X_{i}$ the $i$-\textbf{skeleton} of $X$. In particular, $X_{-1}=\phi$, $X_{0}=X^{T}$, and $X_{r}=X$.
 
The following is due to Allay-Franz-Puppe (\cite{AFP} Theorem 5.7).
  \begin{thm}\label{sys thrm}
  	 Let $j\geqslant 0$ and let $T$ be a torus of rank $r$, and $X$ be a finite $T$-CW complex. Consider the sequence
  	\begin{equation}\label{at sequence}
  	0\rightarrow H_{T}^*(X)\rightarrow H_{T}^*(X_{0})\xrightarrow{\delta} H_{T}^{*+1}(X_{1},X_{0})\xrightarrow{\delta_{1}}\cdots \xrightarrow{\delta_{r}} H_{T}^{*+r}(X_{r},X_{r-1})\rightarrow 0,
  	\end{equation}
  	where ${\delta_{i}}$ is the boundary map of the triple $(X_{i-1}, X_{i}, X_{i+1})$.
	Then (\ref{at sequence}) is exact for all the positions $i\leqslant j-2$ if and only if ${H^{*}_{T}}(X)$ is j-th syzygy.  
In particular, the sequence 
\begin{equation}
\label{chang sequence}
0\rightarrow H_T^*(X)\rightarrow H_T^*(X_0) \xrightarrow{\delta} H_T^{*+1}(X_1,X_0)\end{equation} is exact if and only if $H_T^*(X)$ is a 2-syzygy if and only if $H_T^*(X)$ is reflexive.
 \end{thm}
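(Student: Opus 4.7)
My plan is to prove Theorem \ref{sys thrm} by interpreting the Atiyah-Bredon complex as the output of the spectral sequence of the skeletal filtration $X_0 \subset X_1 \subset \cdots \subset X_r = X$, and then relating its cohomology to the local cohomology of $H_T^*(X)$ as a module over $R = H^*(BT)$; the syzygy characterization then follows from standard commutative algebra.

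First, the long exact sequences of the pairs $(X_{i-1}, X_i)$ splice into a spectral sequence with $E_1$-page $E_1^{p,q} = H_T^{p+q}(X_p, X_{p-1})$ converging to $H_T^{p+q}(X)$, whose $d_1$-differential is precisely the triple connecting map $\delta_p$. Thus the Atiyah-Bredon complex (\ref{at sequence}), augmented by the edge map $H_T^*(X) \to H_T^*(X_0)$, is built from the $E_1$-row of this spectral sequence, and its cohomology at position $i$ is captured by the $E_2^{i,*}$ terms.

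Next, I would exploit the support-codimension structure of the $E_1$-row. For a point $x$ belonging to an orbit of dimension exactly $i$, the stabilizer $T_x$ has rank $r - i$, and the Borel Localization Theorem (Theorem \ref{locthm}) together with the K\"unneth formula gives that $H_T^*(T \cdot x) \cong H^*(BT_x)$ is an $R$-module supported on a codimension-$i$ linear subspace of $\mathrm{Spec}(R) \cong \lie{t}_{\C}^*$. Consequently each $E_1^{i,*} = H_T^*(X_i, X_{i-1})$ is supported in codimension $\geq i$, giving the Atiyah-Bredon complex the structure of a Cousin-type complex filtered by support. One then shows, following the Allday-Franz-Puppe argument, that this complex computes (up to appropriate degree shifts) the local cohomology $H^*_\mathfrak{m}(H_T^*(X))$ with respect to the irrelevant ideal $\mathfrak{m} = (x_1, \ldots, x_r) \subset R$, so that exactness of (\ref{at sequence}) at position $i$ translates to vanishing of $H^i_\mathfrak{m}(H_T^*(X))$.

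Finally, I would apply the commutative-algebra dictionary: for a finitely generated graded module $M$ over the polynomial ring $R$, $M$ is a $j$-th syzygy iff $M$ satisfies Serre's condition $S_j$, which by Grothendieck's characterization of depth via local cohomology is equivalent to $H^i_\mathfrak{m}(M) = 0$ for all $i < j$. Combining this with the identification of step two gives that $H_T^*(X)$ is a $j$-th syzygy if and only if (\ref{at sequence}) is exact at positions $i \leq j - 2$; the "in particular" clause follows from the $j = 2$ case together with the bullet-point equivalence of 2-syzygy and reflexive recorded just above the theorem. The principal obstacle is making the Cousin-type identification of step two precise — tracking how the increasing-codimension structure of the $E_1$ supports interacts with the differentials $\delta_p$ to genuinely compute local cohomology — and this is the technical heart of the Allday-Franz-Puppe argument.
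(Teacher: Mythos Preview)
The paper does not prove Theorem \ref{sys thrm} at all: it is quoted verbatim as Theorem 5.7 of Allday--Franz--Puppe \cite{AFP} and used as a black box. So there is no ``paper's own proof'' to compare against; your proposal is in effect a sketch of the \cite{AFP} argument itself.

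As such a sketch, your steps one and two are on the right track---the skeletal spectral sequence and the codimension-of-support structure of the $E_1$-row are exactly the ingredients Allday--Franz--Puppe exploit. However, two points deserve care. First, what \cite{AFP} actually identify the cohomology of the Atiyah--Bredon complex with is not $H^i_{\mathfrak m}(H_T^*(X))$ but rather $\mathrm{Ext}^i_R\bigl(H^T_*(X),R\bigr)$, where $H^T_*(X)$ is equivariant \emph{homology}; the passage between these involves equivariant Poincar\'e--Alexander--Lefschetz duality and is not a formality. Second, and more seriously, your step three asserts that $M$ is a $j$-th syzygy iff $H^i_{\mathfrak m}(M)=0$ for $i<j$. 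That equivalence characterizes $\mathrm{depth}_{\mathfrak m}(M)\ge j$, not the syzygy order: being a $j$-th syzygy is Serre's condition $S_j$, which constrains depth at \emph{every} prime, not just the irrelevant one. The correct commutative-algebra input is that $M$ is a $j$-th syzygy iff $\mathrm{Ext}^i_R(D(M),R)=0$ for $1\le i\le j-1$ (with $D(M)$ the dual or, in the AFP setting, the equivariant homology), which is what matches the Ext identification above. If you want to pursue this route, that is the dictionary to use.
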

 The sequences (\ref{at sequence}) and (\ref{chang sequence}) are known as the Atiyah-Bredon sequence and the Chang-Skjelbred sequence respectively. Observe that if (\ref{chang sequence}) is exact, then 
	 \begin{equation*}
	 H_{T}^*(X)\cong ker(\delta).
	 \end{equation*}
GKM theory is concerned with calculating $ker(\delta).$

\section{GKM-Sheaves}\label{GKM-Sheaves}
Recall the weight lattice $\Lambda := Hom( T, \mathbb{S}^{1})$ from (\ref{weight}).  Declare two weights $\alpha, \beta$ to be projectively equivalent if $\alpha^n = \beta^m$ for some $m,n \in \Z$. The set of \textit{projective weights} $\mathbb{P}(\Lambda)$ is the set of non-zero weights in $\Lambda$ modulo projective equivalence. The elements of $\mathbb{P}(\Lambda)$ are in one to one correspondence with the codimension one subtori of $T$ by the rule 
$$\alpha \in \mathbb{P}(\Lambda)\leftrightarrow \ker_{0}(\tilde{\alpha})\leq T$$
where $\tilde{\alpha}\in\Lambda$ is a representative of $\alpha$, and $\ker_{0}(\tilde{\alpha})$ is the identity component of the kernel of  $\tilde{\alpha}:T\longrightarrow S^{1}. $ We denote $\ker(\alpha)=\ker_{0}(\tilde{\alpha})$.

\begin{defn}
	A \textbf{GKM-hypergraph} $\Gamma$ consists of:
	\begin{enumerate}
		
		\item A finite set of vertices $\V={\cal V}_{\Gamma}$.
		\item An equivalence relation $\sim_{\alpha}$ on ${\cal V}$ for each $\alpha\in\mathbb{P}(\Lambda)$.
	\end{enumerate}
	
\end{defn}

Given a GKM-hypergraph $\Gamma$, the set of hyperedges is defined to be
$${\cal E}={\cal E}_{\Gamma}  := \{ (S, \alpha) \in \wp(\V) \times \mathbb{P}(\Lambda)\; |\;S \;is\; an\; equivalence\; class \;for\; \sim_{\alpha}\}$$
where $\wp(V)$ is the power set of $\V.$ We have projection maps
\begin{itemize} 
	\item $a: {\cal E} \rightarrow \mathbb{P}(\Lambda)$~~~~\text{the \textit{axial function}, and} 
	\item $I: {\cal E} \rightarrow \wp(\cal V)$ ~~~\text{the \textit{incidence function}.}
\end{itemize}
In particular, each hyperedge $e\in {\cal E}$ has associated to it a projective weight $a(e)$ and a non-empty subset $I(e)\subseteq \V$. We say a vertex $v\in \V$ is \textit{incident} to $e\in {\cal E}$ if $v\in I(e)$. Given $\alpha \in \mathbb{P}(\Lambda)$ denote by $\E_{\alpha} := \{ e \in \E~|~a(e) = \alpha\}$.

Let Top$(\Gamma)$ to be the topological space with underlying set $\V \cup {\cal E}$ generated by basic open sets $U_{v}=\{v\}$ for $v\in {\cal{V}}$, and $U_{e}=\{e\}\cup I(e)$ for $e\in {\cal{E}}.$ Observe that for each $x \in Top(\Gamma)$, the set $U_x$ is smallest open set containing $x$.

\begin{defn}\label{gkm sheaf} 
Let $R := H^*(BT) \cong \C[\lie{t}]$. A GKM-sheaf $\cal F$ is a sheaf of finitely generated, $\mathbb{Z}-$graded $R$-modules over Top$(\Gamma)$, satisfying the following conditions.
	\begin{enumerate} 
		\item  $\cal F$ is locally free (i.e, for every basic open set $U_{x}$, the stalk $\F(U_{x})=\F_{x}$ is a free $R$-module).
		\item For every hyperedge $e \in {\cal E}_{\Gamma},$ the restriction map $res_e: {\cal F}(U_e) \rightarrow {\cal F}(I(e))$ is an isomorphism upon inverting $a(e)$:$${\cal F}(U_e) \otimes_R R[a(e)^{-1}] \cong {\cal F}(I(e)) \otimes_R R[a(e)^{-1}].$$
		\item The restriction map $res_e: {\cal F}(U_e) \rightarrow {\cal F}(I(e))$ is an isomorphism for all but a finite number of $e \in {\cal E}_{\Gamma}.$
	\end{enumerate}
	\end{defn}
The main motivating example is the GKM-hypergraph $\Gamma_{X}$ and GKM-sheaf $\F_X$ associated to a finite $T$-CW complex $X$. The vertex set of $\Gamma_X$ is the set $ \V_X := \pi_0(X^T)$ of path components of the $T$-fixed point set $X^T$. Define $v_{1}\sim_{\alpha} v_{2}$ if and only $v_1$ and $v_2$ lie in the same path component of the fixed point set $X^{\ker(\alpha)}$. The hyperedges $ e \in \E_{\alpha} := \{ e \in \E ~|~ a(e) = \alpha\}$ therefore correspond to path components of $X^{\ker(\alpha)}$ that intersect $X^T$ non-trivially. 

Define the GKM-sheaf $\F_{X}$ over $\Gamma_{X}$, as follows. The stalk at a vertex $v \in \pi_0(X^T)$ is  $$\F_{X}(U_{v}):=H_{T}^{\ast}(v),$$ and at a hyperedge $e \in \pi_0(X^{\ker{(\alpha)}})$ is 
$$\F_{X}(U_{e})=\F_{X}( e\cup I(e))=H_{T}^{\ast}(e)/t,$$ 
where $t$ is the torsion submodule of $H_{T}^{\ast}(e).$ The sheaf restriction maps $res_{e}:\F_{X}(U_{e})\rightarrow \F_{X}(I(e))$ are identified with the natural map $H_{T}^{\ast}(e)/t \rightarrow H_{T}^{\ast}(e^{T})$ which is well defined because  $e^{T}\subset e$ and $H_{T}^{\ast}(e^{T})$ is torsion free. This data completely determines $\F_X$.

The following result (Proposition 2.7 in \cite{TB}), relates the degree zero sheaf cohomology of $\F_{X}$ with the equivariant cohomology of $X$.

\begin{prop}\label{fitting}
	Let $X$ be a finite  $T$-CW complex. The space of global sections $H^{0}
	(\F_{X} ) $ fits into an exact
	sequence of graded $R$-modules
	\begin{equation}0\rightarrow H^{0}(\F_{X})\xrightarrow{r} H_T^*(X_0) \xrightarrow{\delta} H_T^{*+1}(X_1,X_0).\end{equation}
\end{prop}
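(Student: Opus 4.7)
My strategy is to exploit the two-term chain complex computing $H^0(\F_X)$ and $H^1(\F_X)$ (with $H^n(\F_X)=0$ for $n\ge 2$) developed in \S\ref{Cohomology of GKM-Sheaves}, and then identify its terms with the corresponding groups in the long exact sequence of the pair $(X_1,X_0)$.

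From the previous section I would use the four-term exact sequence
\[
0 \to H^0(\F_X) \to \prod_v \F_X(U_v) \xrightarrow{d} \prod_e \F_X(I(e))/\mathrm{im}(res_e) \to H^1(\F_X) \to 0,
\]
which arises (for any GKM-sheaf $\F$) from the short exact sequence of sheaves $0\to\F\to\bigoplus_v(\iota_v)_*\F(U_v)\to\mathcal{C}\to 0$, in which each $(\iota_v)_*\F(U_v)$ is flasque and $\mathcal{C}$ is supported on the closed edge points with stalks $\F(I(e))/\mathrm{im}(res_e)$. By construction $\prod_v \F_X(U_v)=H_T^*(X_0)$.

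To identify the cokernel term with $H_T^{*+1}(X_1,X_0)$, note that distinct components $X_e, X_{e'}$ associated to edges $e\ne e'$ satisfy $X_e\cap X_{e'}\subseteq X_0$ (a common point would be fixed by both codimension-one subtori $\ker(a(e))$ and $\ker(a(e'))$, hence by $T$), so excision yields $H_T^{*+1}(X_1,X_0)\cong\bigoplus_e H_T^{*+1}(X_e,I(e))$. For each pair $(X_e, I(e))$, the long exact sequence produces
\[
\F_X(I(e))/\mathrm{im}(res_e)\;\cong\;\mathrm{im}(\delta_e)\;=\;\ker\!\bigl(H_T^{*+1}(X_e,I(e))\xrightarrow{p_e} H_T^{*+1}(X_e)\bigr),
\]
which assembles to a natural inclusion $\prod_e\F_X(I(e))/\mathrm{im}(res_e)\hookrightarrow H_T^{*+1}(X_1,X_0)$ that intertwines $d$ with the connecting map $\delta$ of $(X_1,X_0)$.

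The main obstacle is upgrading this inclusion to an equality, i.e.\ showing that $p_e = 0$ for every edge. Since $H_T^{*+1}(X_e,I(e))$ is annihilated by a power of $a(e)$ (Borel localization applied to the induced $T/\ker(a(e))\cong\mathbb{S}^1$-action on $X_e$), it is enough to prove that $H_T^*(X_e)$ is $a(e)$-torsion-free, which I expect to follow from the reflexivity hypothesis via Allday--Franz--Puppe-type results relating the syzygy order of $H_T^*(X)$ to the equivariant cohomology of fixed-point sets of subtori of $T$. Once $p_e=0$ is established, the chain complex sequence coincides with the claimed one; combining with Proposition \ref{fitting} and Theorem \ref{sys thrm}, which identify $H^0(\F_X)=\ker(\delta)=H_T^*(X)$ precisely when $H_T^*(X)$ is reflexive, yields the exact sequence.
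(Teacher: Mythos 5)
Your opening construction is essentially right, and it is in fact the mechanism the paper itself relies on (the paper quotes Proposition \ref{fitting} from \cite{TB} without reproving it, but your four-term sequence with cokernel terms $\cok(res_e)$ is equivalent to Proposition \ref{last prop}, and your excision step is Lemma \ref{decompose lemma}). Two details there deserve attention: the injectivity of the sheaf map $\F_X\to\bigoplus_v(\iota_v)_*\F_{X,v}$ requires each $res_e$ to be injective, which holds because the stalk $\F_X(U_e)$ is a free $R$-module and $res_e$ becomes an isomorphism after inverting $a(e)$; and your decomposition of $H_T^{*+1}(X_1,X_0)$ omits the summand $H^*_T(X'_1)$ coming from components of $X_1$ that miss $X_0$ (compare Lemma \ref{decompose lemma}). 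The omission is harmless here only because $\delta$ has zero component into that summand, but it should be said.

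The genuine flaw is the final paragraph, where you misread what must be proved. Proposition \ref{fitting} is asserted for an \emph{arbitrary} finite $T$-CW complex and claims only exactness at $H^0(\F_X)$ and at $H_T^*(X_0)$, i.e.\ $H^0(\F_X)\cong\ker(\delta)$; it does not claim that $\prod_e\cok(res_e)\to H_T^{*+1}(X_1,X_0)$ is surjective. Once you have a monomorphism out of $\prod_e\cok(res_e)$ intertwining $d$ with $\delta$, you immediately get $\ker(d)=\ker(\delta)$ and the proof is finished. The ``main obstacle'' you set yourself, namely $p_e=0$, is not needed, and it is false in general: by the long exact sequence of $(e,e^T)$ and the Localization Theorem, $p_e=0$ exactly when $H_T^*(e)$ is torsion-free, which fails for general $X$ (this is why $\F_X(U_e)$ is defined as $H_T^*(e)/t$ in the first place). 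Worse, your proposed remedy imports a reflexivity hypothesis that is not in the statement, and your closing sentence invokes Proposition \ref{fitting} itself, which is circular. Delete the last paragraph and conclude directly from $\ker(d)=\ker(\delta)$.
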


We obtain a generalization of the main result of \cite{TB}, which was originally proven only when $H_T^*(X)$ is a free module.

\begin{cor}
	Let $X$ be a finite $T$-CW complex. If $H_T^*(X)$ is reflexive, then
	 $$H_T^*(X)\cong H^0(\mathcal F_X).$$ 
\end{cor}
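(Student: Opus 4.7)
The plan is to observe that both $H_T^*(X)$ and $H^0(\F_X)$ are characterized as the kernel of the same coboundary map $\delta\colon H_T^*(X_0)\to H_T^{*+1}(X_1,X_0)$, and then to identify them directly.

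First I would invoke Theorem \ref{sys thrm}: since $H_T^*(X)$ is reflexive, it is a 2-syzygy, and so the Chang-Skjelbred sequence
\begin{equation*}
0\rightarrow H_T^*(X)\rightarrow H_T^*(X_0) \xrightarrow{\delta} H_T^{*+1}(X_1,X_0)
\end{equation*}
is exact. This yields a natural isomorphism $H_T^*(X)\cong \ker(\delta)$ of graded $R$-modules, where $\delta$ is the coboundary map of the pair $(X_1,X_0)$.

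Next I would apply Proposition \ref{fitting} which provides the exact sequence
\begin{equation*}
0\rightarrow H^{0}(\F_{X})\xrightarrow{r} H_T^*(X_0) \xrightarrow{\delta} H_T^{*+1}(X_1,X_0),
\end{equation*}
in which $\delta$ is the same coboundary map. Exactness at $H_T^*(X_0)$ identifies $H^0(\F_X)$ with $\ker(\delta)$ via the restriction map $r$.

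Combining the two identifications gives the desired isomorphism $H_T^*(X)\cong \ker(\delta)\cong H^{0}(\F_X)$. There is essentially no obstacle here since all the work has been done in Theorem \ref{sys thrm} and Proposition \ref{fitting}; the only small point to notice is that the maps labelled $\delta$ in the two sequences are literally the same coboundary map from the long exact sequence of $(X_1,X_0)$, so the two copies of $\ker(\delta)$ agree without further argument.
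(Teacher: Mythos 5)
Your proposal is correct and follows exactly the paper's own argument: combine Proposition \ref{fitting} with the exactness of the Chang-Skjelbred sequence (\ref{chang sequence}), which holds because $H_T^*(X)$ is reflexive, to identify both modules with $\ker(\delta)$. Nothing is missing.
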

\begin{proof}
	Combine Proposition \ref{fitting} with the Chang-Skjelbred sequence (\ref{chang sequence}) which holds if $H_T^*(X)$ is reflexive.
	\end{proof}
For later use, we state the following lemmas from \cite{TB}.
\begin{lem}\label{lemma local}
	If $X$ is a finite $T$-CW complex and $H \subset T$ is a codimension one subtorus, then $H^{\ast}_{T}(X^{H}) $ is
	the direct sum of a free and a torsion $R$-module. If $H^{\ast}_{T}(X) $ is torsion free, then $H^{\ast}_{T}(X^{H}) $ is free.
\end{lem}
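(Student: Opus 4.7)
The plan is to handle the two assertions separately. The first reduces to the structure theorem over a PID. I would choose a splitting $T \cong H \times S^1$, giving $R = R' \otimes \C[u]$ with $R' := H^*(BH)$ and $u$ a degree-two generator of $H^*(BS^1)$. Since $H$ acts trivially on $X^H$, the Borel construction factors as $(X^H)_{hT} \simeq BH \times (X^H)_{hS^1}$, so K\"unneth yields
\begin{equation*}
H^*_T(X^H) \cong R' \otimes_{\C} H^*_{S^1}(X^H)
\end{equation*}
as graded $R$-modules. Because $\C[u]$ is a PID and $H^*_{S^1}(X^H)$ is finitely generated, the structure theorem for graded modules over a PID decomposes it as $F_0 \oplus T_0$ with $F_0$ free and $T_0$ annihilated by some power of $u$. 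Tensoring with $R'$ gives $H^*_T(X^H) = (R' \otimes F_0) \oplus (R' \otimes T_0)$, a direct sum of a free and a torsion $R$-module, proving the first statement.

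For the second assertion, the goal is to show that $T_0 = 0$ when $H^*_T(X)$ is torsion-free. I would localize at the height-one prime $\mathfrak{p} = (u) \subset R$, the defining ideal of $H \subset T$ viewed inside $\operatorname{Spec}(R) \cong \lie{t}$. The key input is the vanishing
\begin{equation*}
H^*_T(X, X^H)_{\mathfrak{p}} = 0,
\end{equation*}
which by the long exact sequence of the pair gives an isomorphism $H^*_T(X)_{\mathfrak{p}} \xrightarrow{\sim} H^*_T(X^H)_{\mathfrak{p}}$. Torsion-freeness of $H^*_T(X)$ over $R$ passes to the localization, so $H^*_T(X^H)_{\mathfrak{p}}$ is torsion-free over the DVR $R_{\mathfrak{p}}$. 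Since every nonzero element of $R'$ becomes invertible in $R_{\mathfrak{p}}$ while $u$ remains in the maximal ideal, the summand $R' \otimes T_0$ survives localization as a nonzero $u$-torsion $R_{\mathfrak{p}}$-module whenever $T_0 \neq 0$. Torsion-freeness thus forces $T_0 = 0$, and $H^*_T(X^H) = R' \otimes F_0$ is free.

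The main obstacle is verifying the vanishing $H^*_T(X, X^H)_{\mathfrak{p}} = 0$. My approach exploits that the complement $Y := X \setminus X^H$ has empty $H$-fixed set: Borel localization applied to the $H$-action shows $H^*_H(Y)$ is an $R'$-torsion module, annihilated by some nonzero $p \in R'$. Propagating this annihilation through the Leray--Serre spectral sequence for the fibration $Y_{hH} \to Y_{hT} \to BS^1$ (which has finitely many nonzero rows because $Y$ is a finite CW complex) shows $H^*_T(Y)$ is killed by a power of $p$; since $p \notin \mathfrak{p}$, we obtain $H^*_T(Y)_{\mathfrak{p}} = 0$. An excision argument (equivalently, passing to a $T$-invariant tubular neighborhood of $X^H$) then transfers the vanishing to $H^*_T(X, X^H)_{\mathfrak{p}} = 0$, completing the proof.
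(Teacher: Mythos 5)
The paper does not actually prove this lemma; it defers to Lemma 2.6 of \cite{TB}, so the comparison below is against the standard argument rather than an in-text proof. Your first paragraph is correct and is essentially that standard argument: splitting $T\cong H\times S^{1}$ gives $(X^{H})_{hT}\simeq BH\times (X^{H})_{hS^{1}}$, hence $H^{*}_{T}(X^{H})\cong R'\otimes_{\C}H^{*}_{S^{1}}(X^{H})$, and the graded structure theorem over the PID $\C[u]$ produces the free-plus-torsion decomposition; tensoring with $R'$ preserves freeness and preserves torsion because $T_{0}$ is killed by a power of $u$. Your reduction of the second assertion to the vanishing $H^{*}_{T}(X,X^{H})_{\mathfrak{p}}=0$ at $\mathfrak{p}=(u)$ is also the right idea, and the deduction from it is sound: $H^{*}_{T}(X^{H})_{\mathfrak{p}}$ inherits torsion-freeness over $R_{\mathfrak{p}}$, while a nonzero $T_{0}$ would contribute a nonzero $u$-torsion element whose annihilator lies inside $\mathfrak{p}$ and which therefore survives localization.

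The gap is in your justification of $H^{*}_{T}(X,X^{H})_{\mathfrak{p}}=0$. For a finite $T$-CW complex, $X^{H}$ has no $T$-invariant tubular neighborhood, and $Y=X\setminus X^{H}$ is an open subset that is not itself a finite $H$-CW complex, so Theorem~\ref{locthm} (stated for finite complexes) does not apply to $Y$ as written, and without finite generation $H^{*}_{H}(Y)$ need not be annihilated by a single $p\in R'$. Moreover, the fiber of $Y_{hH}\to Y_{hT}\to BS^{1}$ is $Y_{hH}$, whose cohomology is typically nonzero in infinitely many degrees even when torsion (e.g.\ a single free orbit $H/K$ with $\dim K>0$), so the spectral sequence does not have finitely many nonzero rows; one only gets that each $H^{n}_{T}(Y)$ is killed by a power of $p$ depending on $n$, which happens to suffice for localizing but is not the statement you made. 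All of this is repaired by arguing on the pair rather than on the complement: $X^{H}$ is a $T$-CW subcomplex, and $X$ is obtained from it by attaching finitely many cells $T/K\times D^{n}$ with $H\not\subseteq K$. The resulting filtration of $H^{*}_{T}(X,X^{H})$ has subquotients $H^{*-n}(BK)$, and since $\mathfrak{h}\not\subseteq\mathfrak{k}$ there is a linear form vanishing on $\mathfrak{k}$ but not on $\mathfrak{h}$, i.e.\ an element of $\mathrm{Ann}_{R}\,H^{*}(BK)$ not lying in $(u)$. Hence every subquotient, and therefore $H^{*}_{T}(X,X^{H})$ itself, vanishes after localizing at $(u)$. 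With that substitution your proof closes up.
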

\begin{proof}
This is Lemma 2.6 in \cite{TB}.
\end{proof}
\begin{lem}
	\label{decompose lemma}
	Let $X^{\prime}_{1}$ be the union of those components of $X_{1}$ which do not intersect with $X_{0}$. Then $H_T^{*}(X_1,X_0)$ decomposes into 
	
	\begin{equation}\label{decomp}
	H_T^{*}(X_1,X_0)\cong \bigoplus_{e\in \E}H_T^{*}(e,e^{T})\oplus H^{*}_{T}(X^{\prime}_{1}).
	\end{equation}
\end{lem}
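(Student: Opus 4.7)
The plan is to turn a geometric decomposition of $X_1$ into the claimed cohomological decomposition via excision and Mayer--Vietoris. The crucial geometric observation is that any two distinct hyperedges $e \neq e' \in \mathcal{E}$ satisfy $e \cap e' \subseteq X_0$. Indeed, if they are distinct path components of the same $X^{\ker(\alpha)}$ they are automatically disjoint; otherwise $e \subseteq X^{\ker(\alpha)}$ and $e' \subseteq X^{\ker(\beta)}$ for distinct $\alpha,\beta \in \mathbb{P}(\Lambda)$, in which case the two distinct codimension-one subtori $\ker(\alpha), \ker(\beta)$ together generate $T$, so $e \cap e' \subseteq X^{\ker(\alpha)} \cap X^{\ker(\beta)} = X^T = X_0$.

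First I would split off the contribution of $X'_1$. Write $X_1 = \tilde{A} \sqcup X'_1$ as a disjoint union of $T$-invariant clopen subsets of $X_1$, where $\tilde{A}$ is the union of those path components of $X_1$ that do meet $X_0$. Because $X'_1$ is disjoint from $X_0$, the pair $(X_1, X_0)$ is literally the disjoint union of pairs $(\tilde{A}, X_0) \sqcup (X'_1, \emptyset)$, and passage to the Borel construction preserves this disjoint union. Hence $H_T^*(X_1, X_0) \cong H_T^*(\tilde{A}, X_0) \oplus H_T^*(X'_1)$.

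It then remains to identify $H_T^*(\tilde{A}, X_0) \cong \bigoplus_{e \in \mathcal{E}} H_T^*(e, e^T)$. Since $X$ is a finite $T$-CW complex, $\mathcal{E}$ is finite, and I would enumerate its elements as $e_1,\ldots,e_m$ and induct on $k$, setting $A_k := X_0 \cup e_1 \cup \cdots \cup e_k$. For the inductive step, apply the Mayer--Vietoris sequence for pairs to the cover $\{A_{k-1},\, X_0 \cup e_k\}$ of $A_k$ (each containing $X_0$). The key observation gives $A_{k-1} \cap (X_0 \cup e_k) = X_0 \cup \bigcup_{i<k}(e_i \cap e_k) = X_0$, so the overlap term $H_T^*(X_0, X_0)$ vanishes and the sequence collapses to $H_T^*(A_k, X_0) \cong H_T^*(A_{k-1}, X_0) \oplus H_T^*(X_0 \cup e_k, X_0)$. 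Excising $X_0 \setminus e_k^T$ from the second summand identifies it with $H_T^*(e_k, e_k^T)$, closing the induction.

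The main obstacle is really the geometric intersection claim together with the verification that the decomposition of $X_1$ into $\tilde{A}$ and $X'_1$ is genuinely a decomposition into $T$-invariant clopen pieces; once that is in hand, the cohomological bookkeeping (disjoint-union splitting, Mayer--Vietoris, excision) is standard after passing to Borel constructions, under which all of these constructions are preserved.
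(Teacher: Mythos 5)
The paper offers no internal proof of this lemma (it simply cites \cite{TB}, Prop.~2.7), so your self-contained argument --- split off $X'_1$ as a clopen piece, then assemble $X_0\cup\bigcup_{e}e$ one hyperedge at a time via relative Mayer--Vietoris using the key fact $e\cap e'\subseteq X_0$, and finish with excision --- is a legitimate and essentially correct route, and each individual cohomological step (disjoint-union splitting, the collapse of the Mayer--Vietoris sequence when the overlap is $X_0$, the identification $H_T^*(X_0\cup e_k,X_0)\cong H_T^*(e_k,e_k^T)$) checks out. There are, however, two concrete gaps. First, $\E$ is \emph{not} finite: for all but finitely many $\alpha\in\mathbb{P}(\Lambda)$ one has $X^{\ker(\alpha)}=X^T$, and then every component $v$ of $X^T$ contributes a hyperedge $(\{v\},\alpha)$ for each such $\alpha$, so $\E$ is infinite whenever $r\geq 2$ and $X^T\neq\emptyset$. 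Your enumeration and induction must instead run over the finitely many hyperedges with $e\not\subseteq X_0$ (finitely many because a finite $T$-CW complex has finitely many orbit types, so only finitely many $\alpha$ satisfy $X^{\ker(\alpha)}\neq X^T$); the remaining hyperedges have $e=e^T$, contribute $0$ to the direct sum, and add nothing to the union, so the conclusion is unaffected, but the proof as written cannot start.

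Second, and more substantively, your induction computes $H_T^*(A_m,X_0)$ for $A_m=X_0\cup\bigcup_{e\in\E}e$, but you never verify that $A_m$ equals $\tilde A$, the union of the components of $X_1$ meeting $X_0$ --- and this equality is exactly what lets the two halves of your argument meet. A priori a component $C$ of $X_1$ with $C\cap X_0\neq\emptyset$ might contain a path component $P$ of some $X^{\ker(\alpha)}$ with $P\cap X^T=\emptyset$; such a $P$ is not a hyperedge and would lie in $\tilde A\setminus A_m$. This cannot happen, but it needs an argument: writing $X_1=\bigcup_H X^H$ as an effectively finite union of closed sets, your own intersection observation shows such a $P$ is disjoint from every other path component of every $X^{\ker(\beta)}$, hence is clopen in $X_1$, hence is itself a full component of $X_1$ and so lies in $X'_1$. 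With that added, $\tilde A=A_m$ and the induction closes; I would state and prove this point explicitly, since it is precisely where the definition of $X'_1$ interacts with the hyperedge combinatorics.
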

\begin{proof}
This is Proposition 2.7 in \cite{TB}.
\end{proof}

\section{Sheaf Cohomology using the Godement Resolution}\label{SCGR}

We summarize material from Iversen \cite{I}. Given a sheaf ${\cal F}$ over a topological space $Y$, define the sheaf $C^{0}{\cal F}$ which sends open sets $U\subseteq Y$ to be the product of stalks
\begin{equation}\label{god}
C^{0}{{\cal F}(U)}=\prod_{y \in U}{{\cal F}_y}
\end{equation}
and whose restriction morphisms are given by projection. There is a natural monomorphism of sheaves,
\begin{equation}\label{cokdefF}
 \F\longrightarrow C^{0}{\cal F}
 \end{equation}
which sends $s \in \F(U)$ to the product of germs $(s_y)_{y \in U} \in C^0(\F)(U)$. Construct sheaves $\F^{n}$ for all $n\geqslant 0$ inductively by setting $\F^0 := \F$ and setting $\F^{n}$ equal to the cokernel sheaf of the natural monomorphism $\F^{n-1} \rightarrow C^0\F^{n-1}$ for all $n \geq 1$. Denote $C^{n}{\cal F}= C^{0}\F^{n}$ . By construction we get short exact sequences of sheaves
\begin{equation*}
0\rightarrow {\cal F}^{n}\xrightarrow{f_n} C^{n} {\cal F}\xrightarrow{g_n}{\cal F}^{n+1}\rightarrow 0
\end{equation*}
for all $n\geq 0$.
Let $d_n:= f_{n+1}\circ g_n$ be the composition,
\begin{equation}\label{bodrymap}
C^{n}{\cal F}\xrightarrow{g_n}{\F^{n+1}}\xrightarrow{f_{n+1}} C^{n+1}{\cal F}
\end{equation}
\begin{thm}
	The sequence of sheaves
\begin{equation}\label{cod res}
0\rightarrow{\cal F}\rightarrow C^{0}{\cal F}\xrightarrow{d_0} C^{1}{\cal F}\xrightarrow{d_1}\dots \xrightarrow{d_{n-1}} C^{n}{\cal F}\xrightarrow{d_n}\cdots
\end{equation}
is exact. It is called  the \bf{Godement resolution} of $\F$.
\end{thm}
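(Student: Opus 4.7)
The plan is to prove exactness position by position, deducing everything from the short exact sequences
\begin{equation*}
0 \to \F^n \xrightarrow{f_n} C^n\F \xrightarrow{g_n} \F^{n+1} \to 0
\end{equation*}
that come for free from the construction, together with the defining identity $d_n = f_{n+1} \circ g_n$. The only genuine input is that these sequences really are short exact as sheaves: $f_n$ is a monomorphism by the analog of (\ref{cokdefF}), and $g_n$ is an epimorphism because $\F^{n+1}$ is defined as the sheaf cokernel, so each sequence is exact on stalks.

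For exactness at $C^n\F$ with $n \geq 1$, first identify $\operatorname{im}(d_{n-1}) = \operatorname{im}(f_n \circ g_{n-1}) = \operatorname{im}(f_n)$, using that $g_{n-1}$ is an epimorphism onto $\F^n$. Next identify $\ker(d_n) = \ker(f_{n+1} \circ g_n) = \ker(g_n)$, using that $f_{n+1}$ is a monomorphism. Finally, exactness of the short exact sequence for $n$ gives $\ker(g_n) = \operatorname{im}(f_n)$, and stringing these together yields $\ker(d_n) = \operatorname{im}(d_{n-1})$, as desired. Exactness at $\F$ is just the injectivity of $\F \to C^0\F$, and exactness at $C^0\F$ is the same kernel-equals-image calculation applied to $d_0 = f_1 \circ g_0$: since $f_1$ is mono, $\ker(d_0) = \ker(g_0) = \operatorname{im}(f_0)$ is precisely the image of $\F \to C^0\F$.

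The main thing to be careful about is that images, kernels, and cokernels here are taken in the category of sheaves rather than presheaves, so the equalities above must be checked on stalks; but $C^0\F$ is built directly from stalks via (\ref{god}), and $\F^{n+1}$ is defined so that the canonical sequence is exact on stalks, so all of these stalkwise verifications are immediate. No obstacle of substance arises; once the short exact sequences are in hand, exactness of the Godement resolution is essentially formal from the factorization $d_n = f_{n+1} \circ g_n$.
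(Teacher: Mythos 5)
Your argument is correct and complete: the splicing argument --- factoring $d_n = f_{n+1}\circ g_n$, using that $f_{n+1}$ is mono and $g_{n-1}$ is epi to reduce $\ker(d_n)=\operatorname{im}(d_{n-1})$ to the exactness of $0\to\F^n\to C^n\F\to\F^{n+1}\to 0$ --- is the standard proof of exactness of the Godement resolution. The paper itself gives no argument here, deferring entirely to Iversen; what you have written is essentially the proof found in that reference, with the one genuinely non-formal input (injectivity of $\G\to C^0\G$ via the sheaf axiom) correctly isolated.
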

\begin{proof}
See \cite{I}.
\end{proof}

Given an open set $U\subseteq Y$, define the chain complex  
\begin{equation*}
 0\xrightarrow{d_{-1}} C^{0}{\cal F}(U)\xrightarrow{d_{0}} C^{1}{\cal F}(U) \xrightarrow{d_{1}} \dots \xrightarrow{d_{n}} C^{n}{\cal F}(U) \rightarrow \cdots.
\end{equation*}
which satisfies $d_{n}\circ d_{n-1}=0$ for all $n\geq 0$. Define the degree $n$ cohomology of $\F$ on $U$ by $$H^{n}(U;\F):=\dfrac{\ker(d_{n})}{\text{im}(d_{n-1})}.$$
We use shorthand $H^n(\F) := H^n(Y,\F)$. The sequence 
$$0\rightarrow \F(U) \rightarrow C^{0}{\cal F}(U) \xrightarrow{d_{0}} C^{1}\F(U)$$
is exact, which implies $H^{0}(U,\F)=\F(U).$

\begin{defn}\label{aloeublaorcebuao} For a sheaf $\cal{F}$ on $Y$ and a closed subset $A\subseteq Y$ we define 
\begin{equation*}
	\Gamma_{A}(\F)=\{ s\in \F(Y)  | supp(s)\subseteq A\} 
\end{equation*}
	where $supp(s)=\{y \in Y \:|\:s_{y}\neq0\}.$ If $A = \emptyset$ we write $\Gamma_{\emptyset}(\F) = \Gamma(\F) = \F(Y)$.
\end{defn}

Given the chain complex 
\begin{equation*} 
0\xrightarrow{d_{-1}}\Gamma_{A}({C^{0}{\cal F}})\xrightarrow{d_{0}} \Gamma_{A}({C^{1}{\cal F}})\xrightarrow{d_{1}}\dots. \xrightarrow{d_{n}} \Gamma_{A}(C^{n}{\cal F})\rightarrow\cdots.
\end{equation*}
define the local cohomology $H_{A}^{n}(Y,\F):=\dfrac{\ker(d_{n})}{\text{im}(d_{n-1})}.$ 
\begin{prop}\label{local co}
	Let $A$ be closed in $Y$. A sheaf ${\cal F}$ on $Y$ gives rise to a long exact sequence
	$$ 0\rightarrow H^{0}_{A}(Y,{\cal F})\rightarrow H^{0}(Y,{\cal F})\rightarrow H^{0}(Y-A,{\cal F})\xrightarrow {\delta}H^{1}_{A}(Y,{\cal F})\rightarrow H^{1}(Y,{\cal F}) \rightarrow\cdots
	$$
\end{prop}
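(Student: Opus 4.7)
The plan is to obtain the long exact sequence by applying the snake-style zig-zag lemma to a short exact sequence of chain complexes built from the Godement resolution.

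First I would establish the key short exact sequence of chain complexes
\begin{equation*}
0 \rightarrow \Gamma_A(C^{\bullet}\F) \rightarrow \Gamma(C^{\bullet}\F) \rightarrow \Gamma(Y-A, C^{\bullet}\F) \rightarrow 0.
\end{equation*}
By the definition in (\ref{god}), the module of global sections of $C^{n}\F$ is the full product $\prod_{y \in Y}\F^{n}_{y}$, and restriction to the open set $Y-A$ is literally the projection onto $\prod_{y \in Y-A}\F^n_y$. Because $A$ is closed, the support condition in Definition \ref{aloeublaorcebuao} applied to a section $(s_y)_{y \in Y}$ of $C^{n}\F$ is equivalent to $s_y = 0$ for every $y \in Y-A$, so $\Gamma_A(C^{n}\F) = \prod_{y \in A}\F^{n}_{y}$. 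The claimed short exact sequence is therefore just the evident split exact sequence of products, and it commutes with the differentials $d_n$ since those are built componentwise from the Godement sheaves of $\F^{n}$ and of $\F^{n}|_{Y-A}$.

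Next I would invoke the standard zig-zag lemma for a short exact sequence of chain complexes to produce a long exact sequence in cohomology. By construction the cohomology of the left and middle complexes are $H^{n}_{A}(Y,\F)$ and $H^{n}(Y,\F)$ respectively. For the right-hand complex, I would identify its cohomology with $H^{n}(Y-A,\F)$ by observing that restriction commutes with the Godement construction: the sheaf $(C^{n}\F)|_{Y-A}$ agrees with $C^{n}(\F|_{Y-A})$, because stalks are preserved under restriction to an open set, and the iterated construction $\F^n$ is stalkwise. Hence the right column of the short exact sequence of complexes is precisely the Godement cochain complex computing $H^{\bullet}(Y-A, \F)$.

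Finally I would note that the sequence begins at $n=0$ with the inclusion $\Gamma_A(\F) \hookrightarrow \Gamma(\F)$ (since $H^0$ equals global sections in each case), giving the exact form stated. The only mild obstacle is the verification that the Godement resolution is compatible with restriction and with the support filtration, but both facts reduce to the observation that $C^0$ is defined pointwise as a product of stalks, so everything is transparent at the level of underlying modules once the short exact sequence above is in place.
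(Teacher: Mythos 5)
Your argument is correct. The paper itself gives no proof of this proposition, deferring entirely to Iversen (Proposition 9.2 of \cite{I}), and what you have written is essentially the standard argument that citation hides: because $\Gamma(C^{n}\F)=\prod_{y\in Y}\F^{n}_{y}$ is a product over points, the sequence $0\to\Gamma_{A}(C^{n}\F)\to\Gamma(C^{n}\F)\to\Gamma(Y-A,C^{n}\F)\to 0$ is a split exact sequence of products (this is exactly the flasqueness of the Godement sheaves), and the zig-zag lemma then produces the long exact sequence. Two points deserve slightly more care than you give them, though neither is a gap. First, the support condition in Definition \ref{aloeublaorcebuao} refers to \emph{germs}, not components: for $s=(s_{y})_{y\in Y}$ the germ of $s$ at $y_{0}$ vanishes iff $s$ vanishes componentwise on some neighbourhood of $y_{0}$, which is a priori stronger than $s_{y_{0}}=0$; your identification $\Gamma_{A}(C^{n}\F)=\prod_{y\in A}\F^{n}_{y}$ is nevertheless correct precisely because $Y-A$ is open, so componentwise vanishing on $Y-A$ already forces the germs there to vanish. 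Second, the differential $d_{n}=f_{n+1}\circ g_{n}$ is not literally componentwise (its component at $y$ depends on the germ of $s$ at $y$, hence on components of $s$ near $y$); the property you actually need is that $d_{n}$ is a morphism of sheaves and hence local, which is what guarantees both that $\Gamma_{A}(C^{\bullet}\F)$ is a subcomplex and that the differentials commute with restriction to $Y-A$. Note also that with the paper's own definition of $H^{n}(U,\F)$ as the cohomology of $C^{\bullet}\F(U)$, your verification that the Godement construction commutes with restriction to open sets is not even needed for the statement as written, though it is what identifies the third column with the intrinsic cohomology of $\F|_{Y-A}$.
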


\begin{proof}		
	See Proposition 9.2 in \cite{I}.
\end{proof}

\section{Cohomology of GKM-Sheaves}\label{Cohomology of GKM-Sheaves} 

\subsection{The Godement Chain Complex for GKM-Sheaves}

\begin{prop}\label{higher co}
	If $\Gamma$ is a GKM-hypergraph and ${\cal F}$ is a sheaf on $Top(\Gamma)$, then $C^{n}\F=0$,  for all $n\geq 2$.
\end {prop}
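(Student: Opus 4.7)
The plan is to prove the stronger statement that $\F^n = 0$ for all $n \geq 2$, which immediately implies $C^n\F = C^0\F^n = 0$. By the inductive construction of the $\F^n$, it suffices to show $\F^2 = 0$, equivalently that the natural monomorphism $\F^1 \hookrightarrow C^0\F^1$ is an isomorphism of sheaves. The crucial structural feature that makes this work is that $\Top(\Gamma)$ is an Alexandrov topology: every point $x$ admits a unique smallest open neighborhood, namely $U_v = \{v\}$ for a vertex and $U_e = \{e\} \cup I(e)$ for a hyperedge. Consequently, for any sheaf $\F$ on $\Top(\Gamma)$ one has $\F_x = \F(U_x)$ and $(C^0\F)_x = \prod_{y \in U_x} \F_y$, so that a morphism of sheaves on $\Top(\Gamma)$ can be tested for being an isomorphism stalk by stalk against these finite products.

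First I would compute the stalks of $\F^1$ via $(\F^1)_x = \cok(\F_x \to (C^0\F)_x)$. At a vertex $v$ this map is the identity $\F_v \to \F_v$, so $(\F^1)_v = 0$. At a hyperedge $e$ the map becomes $\F_e \to \F_e \oplus \prod_{v \in I(e)} \F_v$ sending $s \mapsto (s, (res_v(s))_v)$, whose cokernel is naturally isomorphic to $\prod_{v \in I(e)} \F_v$. Next I would repeat the computation for $C^0 \F^1$: the vertex stalks vanish trivially, while at a hyperedge $e$ we have $(C^0\F^1)_e = (\F^1)_e \oplus \prod_{v \in I(e)} (\F^1)_v$, which collapses to $(\F^1)_e$ since all vertex stalks of $\F^1$ are zero, and under this identification the canonical map reduces to the identity. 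Hence $\F^1 \to C^0\F^1$ is a stalkwise isomorphism, therefore an isomorphism of sheaves, and $\F^2 = 0$ as required.

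There is no deep obstacle; the real content is recognizing the Alexandrov structure of $\Top(\Gamma)$, in which hyperedges are closed points while incident vertices behave as generic points. Once the minimal-neighborhood property is exploited to identify stalks with values on $U_x$, the stalkwise calculation is a routine exercise in linear algebra.
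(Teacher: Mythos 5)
Your proposal is correct and follows essentially the same route as the paper: both arguments exploit the minimal open neighborhoods $U_v$ and $U_e$ to compute stalks, observe that $\F^1_v = 0$ at every vertex, and conclude that $\F^1 \rightarrow C^0\F^1$ is a stalkwise isomorphism at hyperedges as well, forcing $\F^2 = 0$ and hence $C^n\F = 0$ for $n \geq 2$. Your explicit remark about the Alexandrov structure only makes precise what the paper uses implicitly.
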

	\begin{proof}
		
		The basic open sets of $Top(\Gamma)$ are
		\begin{itemize}
			\item[(i)] $U_v:=\{v \} $ for vertices $ v \in \V$.
			\item[(ii)] $U_{e}:=\{e \}\cup  I(e)$ for hyperedges $ e\in \E$.
		\end{itemize}
Given a sheaf ${\cal F}$ on $Top(\Gamma)$, for each vertex $v$ we have 
		\begin{equation*}
		(C^{0}{{\cal F}})_{v}:=\prod_{x\in U_{v}}{{\cal F}_{x}}={\cal F}_{v}. 
		\end{equation*}  
The induced morphism ${\cal F}_v \rightarrow (C^{0}{{\cal F}})_{v} ={\cal F}_v$, is an isomorphism, so the cokernel ${\cal F}_{v}^{1}= C^1\F_v$ is zero. Similarly, for all $n\geq 1$,
		\begin{equation}\label{vstalkszero}
		\F_{v}^{n}= C^n\F_v =0.
		\end{equation}
		
For each hyperedge $e$, $\F^{2}_{e}$ is the cokernel of the product of restriction maps
\begin{equation*}\label{f2}
		\F^{1}_{e}\rightarrow  \prod_{x\in U_{e}}{{\cal F}_{x}} \cong \F^{1}_{e}\times \F^{1}_{v_{1}}\times...\times \F^{1}_{v_{k}}.
\end{equation*} 
This is an isomorphism because $\F^{1}_{v_i}=0$, so the cokernel $\F^2_e \cong 0$ . We conclude $\F^{2}=0$ since all of its stalks vanish and consequently $C^{n}\F=0$ for all $n\geq2$. 
\end{proof}
	
	\begin{cor}
		If $\cal F$ is a sheaf over $Top(\Gamma)$, then $H^n({\cal F}) = 0$ for $n \geq 2$ and we have a natural exact sequence  $$0 \rightarrow H^0(\F) \rightarrow \Gamma (C^{0}{{\cal F}})\stackrel{\delta}\longrightarrow\Gamma (C^{1}{{\cal F}}) \rightarrow H^1(\F) \rightarrow 0.$$  
	
	\end{cor}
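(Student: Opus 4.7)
The plan is to derive this corollary essentially directly from Proposition \ref{higher co}. Since $C^n\F = 0$ for all $n \geq 2$, the global section functor gives $\Gamma(C^n\F) = 0$ for $n \geq 2$, so the Godement chain complex computing the cohomology of $\F$ on $Y = Top(\Gamma)$ truncates to
\begin{equation*}
0 \xrightarrow{d_{-1}} \Gamma(C^0\F) \xrightarrow{d_0} \Gamma(C^1\F) \xrightarrow{d_1} 0 \xrightarrow{d_2} 0 \xrightarrow{d_3} \cdots.
\end{equation*}

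First I would observe that for $n \geq 2$ both the numerator $\ker(d_n)$ and denominator $\mathrm{im}(d_{n-1})$ of $H^n(\F) = \ker(d_n)/\mathrm{im}(d_{n-1})$ are zero, so $H^n(\F) = 0$ in all these degrees. Next, using the fact that $H^0(U,\F) = \F(U)$ noted just after the Godement resolution, we identify $H^0(\F) = \Gamma(\F) = \ker(d_0)$, which gives exactness of
\begin{equation*}
0 \to H^0(\F) \to \Gamma(C^0\F) \xrightarrow{\delta} \Gamma(C^1\F)
\end{equation*}
with $\delta := d_0$. Finally, because $d_1$ maps into $\Gamma(C^2\F) = 0$, we have $\ker(d_1) = \Gamma(C^1\F)$, so
\begin{equation*}
H^1(\F) = \ker(d_1)/\mathrm{im}(d_0) = \Gamma(C^1\F)/\mathrm{im}(\delta),
\end{equation*}
and splicing this cokernel presentation onto the previous four-term sequence yields the desired exact sequence
\begin{equation*}
0 \to H^0(\F) \to \Gamma(C^0\F) \xrightarrow{\delta} \Gamma(C^1\F) \to H^1(\F) \to 0.
\end{equation*}

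Naturality in $\F$ follows at each step from the functoriality of the Godement construction $\F \mapsto C^\bullet\F$ and of $\Gamma$. There is no serious obstacle here: the only substantive input is the vanishing $C^n\F = 0$ for $n \geq 2$ established in Proposition \ref{higher co}, and the rest is bookkeeping with the definition of $H^n(\F)$.
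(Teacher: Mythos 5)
Your argument is correct and follows exactly the same route as the paper: both invoke Proposition \ref{higher co} to truncate the Godement chain complex at degree one and then read off $H^n(\F)=0$ for $n\geq 2$, the kernel description of $H^0(\F)$, and the cokernel description of $H^1(\F)$. Your write-up simply spells out the bookkeeping that the paper leaves implicit.
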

	\begin{proof}
		By Proposition \ref{higher co}, $ C^{n}({\cal F})={\cal F}^{n}=0$ for all $ n\geq 2$, the chain complex for $H^n({\cal F})$ is		
		\begin{equation}\label{chain complex}
	0 \rightarrow \Gamma (C^{0}{{\cal F}})\stackrel{\delta}\longrightarrow\Gamma (C^{1}{{\cal F}})\rightarrow 0 \rightarrow 0 \rightarrow \cdots.
		\end{equation}
	\end{proof}
	
We want a more concrete description of (\ref{chain complex}).

\begin{lem}\label{stalk edge}
	The stalks of $\F^1$ are as follows: 
	$\F_{v}^1=0$ for all vertices $v$, and 
	 $$\F^1_{e} \cong   \prod_{i=1}^{k}\F_{v_{i}}=\F(I(e))$$ 
	 for all hyperedges $e$, where $I(e) = \{v_{1},v_{2},....v_{k} \}$ is the set of vertices incident to $e$.
\end{lem}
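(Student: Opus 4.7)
The plan is to use the defining short exact sequence of the Godement construction
$$0 \to \F \to C^0\F \to \F^1 \to 0$$
and compute it on stalks, exploiting the fact that in $Top(\Gamma)$ every point $x$ has a unique smallest open neighborhood $U_x$, so that stalks of any sheaf $\mathcal{G}$ at $x$ equal $\mathcal{G}(U_x)$.

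First I would dispose of the vertex case. For a vertex $v$, the claim $\F^1_v = 0$ is precisely equation (\ref{vstalkszero}) in the proof of Proposition \ref{higher co}, which follows from $U_v = \{v\}$ forcing the natural map $\F_v \to (C^0\F)_v$ to be the identity.

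The main work is the hyperedge case. Fix $e \in \E$ with $I(e) = \{v_1, \ldots, v_k\}$. Because $U_e = \{e\} \cup I(e)$ is the smallest open set containing $e$, and because each $\{v_i\}$ is open (so $I(e)$ is open and $\F(I(e)) \cong \prod_{i=1}^k \F_{v_i}$ by the sheaf axiom on disjoint opens), I compute
$$(C^0\F)_e \;=\; C^0\F(U_e) \;=\; \prod_{x \in U_e} \F_x \;\cong\; \F_e \oplus \F(I(e)).$$
Under this identification, the natural monomorphism $\F_e \hookrightarrow (C^0\F)_e$ sends a germ $s$ to the tuple of germs $(s_x)_{x \in U_e}$, which unwinds to $(s, \mathrm{res}_e(s))$ where $\mathrm{res}_e : \F_e = \F(U_e) \to \F(I(e))$ is the sheaf restriction map.

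Since $\F^1_e$ is the cokernel of this inclusion, the projection onto the second factor composed with subtraction of $\mathrm{res}_e$ on the first factor, namely $(a,b) \mapsto b - \mathrm{res}_e(a)$, descends to an isomorphism
$$\F^1_e \xrightarrow{\;\cong\;} \F(I(e)) \;\cong\; \prod_{i=1}^k \F_{v_i},$$
which yields the desired description. I do not anticipate any real obstacle here beyond being careful that the direct limit defining the stalk genuinely collapses to evaluation on $U_e$ and that the cokernel calculation is with respect to the correct map; both follow directly from the minimal-neighborhood structure of $Top(\Gamma)$ already used in Proposition \ref{higher co}.
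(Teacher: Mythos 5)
Your proposal is correct and follows essentially the same route as the paper: both identify $\F^1_e$ as the cokernel of the stalk map $\F_e \to \prod_{x\in U_e}\F_x = \F_e\times\prod_i\F_{v_i}$, $s\mapsto (s,\mathrm{res}(s))$, and observe that since the first coordinate is the identity the cokernel is $\prod_i\F_{v_i}$ (the paper projects onto the remaining factors, you use the equivalent map $(a,b)\mapsto b-\mathrm{res}_e(a)$). The vertex case is likewise handled by citing the earlier stalk computation, exactly as in the paper.
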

\begin{proof}
That $\F^1_{v}=0$ was proven in (\ref{vstalkszero}). Given a hyperedge $e$, recall $U_{e}:=\{e \}\cup  I(e)$ .  The stalk $\F^{1}_e$ is the cokernel of the map 
	\begin{equation*}
	\F_{e}\xrightarrow{\varepsilon} \prod_{x\in U_e}{{\cal F}_{x}}=\F_{e}\times\F_{v_{1}}\times\cdots\times\F_{v_{k}}
	\end{equation*} 
where ${\varepsilon}(s_{e})=(s_{e},res_{(e,v_{1})}(s_{e}),...,res_{(e,v_{k})}(s_{e}))$, where $res_{(e,v)}: \F_e \rightarrow \F_v$ is the sheaf restriction map associated to $U_v \subseteq U_e$. Because the first coordinate function of $\varepsilon$ is the identity map on $\F_{e}$ we obtain the isomorphism
		$$\F_{e}^{1} := coker(\varepsilon) \cong \prod_{i=1}^{k}\F_{v_{i}}$$
simply by projecting onto the remaining factors. 	
\end{proof}

	\begin{prop}\label{last prop}
		Let $\F$ be a sheaf over $Top(\Gamma)$. Then there is a commutative diagram,

	 \[
	\xymatrix{
		\Gamma(C^{0}\F) \ar@{->}[rr]^{\displaystyle \delta} \ar@{->}[dd]^{\cong}
		&& \Gamma(C^{1}\F) \ar@{->}[dd]^{\cong}    \\ \\
		\prod \limits_{{x\in \V \cup \E}}\F_{x}
		  \ar@{->}[rr]_{\displaystyle \widetilde{\delta}} && \prod\limits_{{e\in \E }}\prod\limits_{{v\in I(e) }} \F_{v}
	}
	\]
		where the vertical maps isomorphisms, $\delta$ is as above and  $\tilde{\delta}$  sends  $s=(s_{x})_{x\in\V\cup\E}$ to $\tilde{\delta}(s)$ with factors
		\begin{equation}
		\tilde{\delta}(s)_{(e,v)}= res_{(e,v)}(s_e)- s_{v}.
		\end{equation}
		In particular, $H^{0}(\F)\cong\ker(\widetilde{\delta})$ and $H^{1}(\F)\cong coker(\widetilde{\delta})$.
	\end{prop}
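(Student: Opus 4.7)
The plan is to make the Godement complex of $\F$ fully explicit at positions $0$ and $1$ by unwinding the $C^0$-construction together with the cokernel sheaf $\F^1$ computed in Lemma \ref{stalk edge}. First I would establish the two vertical isomorphisms: by the definition (\ref{god}) of $C^0$ applied to $U = Top(\Gamma)$,
$$\Gamma(C^0\F) \;=\; \prod_{x\in\V\cup\E}\F_x,$$
and since $C^1\F = C^0\F^1$, the same identity gives $\Gamma(C^1\F) = \prod_{x\in\V\cup\E}\F^1_x$. Lemma \ref{stalk edge} then collapses this latter product to $\prod_{e\in\E}\prod_{v\in I(e)}\F_v$, since $\F^1_v=0$ at every vertex.

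Next I would compute $\delta = f_1\circ g_0$ on stalks in order to read off $\widetilde{\delta}$. At each vertex $v$ the codomain stalk $\F^1_v$ vanishes, so $\delta_v = 0$. At each hyperedge $e$, the stalk $g_0|_e$ is the cokernel projection
$$\F_e\times\prod_{v\in I(e)}\F_v \;\twoheadrightarrow\; \F^1_e,$$
which, under the isomorphism $\F^1_e\cong\prod_{v\in I(e)}\F_v$ constructed in the proof of Lemma \ref{stalk edge}, sends $(t_e,(t_v)_v)$ to $(res_{(e,v)}(t_e)-t_v)_v$; meanwhile $f_1|_e:\F^1_e\hookrightarrow(C^0\F^1)_e$ reduces to the identity on $\F^1_e$ because the adjacent stalks of $\F^1$ are zero. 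Evaluating the composite on a global section $s=(s_x)_{x\in\V\cup\E}$ then recovers the stated formula $\widetilde{\delta}(s)_{(e,v)} = res_{(e,v)}(s_e) - s_v$ and makes the square commute.

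Finally, both cohomology identifications follow immediately from what has been set up. Exactness of $0\to\F\to C^0\F\to C^1\F$ in the Godement resolution shows $H^0(\F) = \Gamma(\F) = \ker(\delta) \cong \ker(\widetilde{\delta})$, and because Proposition \ref{higher co} gives $C^n\F=0$ for $n\geq 2$ the global-sections complex truncates after position $1$, so $H^1(\F) \cong coker(\widetilde{\delta})$. The one subtle point in the argument is the sign convention in the isomorphism $\F^1_e\cong\prod_{v\in I(e)}\F_v$: the alternate natural splitting of the cokernel defining $\F^1_e$ would flip the sign of $\widetilde{\delta}$, so the choice must be matched to the formula in the statement. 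Otherwise the argument is mechanical, aided by the fact that $Top(\Gamma)$ is finite and each point has a smallest basic open neighbourhood, making every relevant stalk directly computable.
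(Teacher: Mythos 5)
Your proposal is correct and follows essentially the same route as the paper: identify $\Gamma(C^0\F)$ and $\Gamma(C^1\F)$ via the defining product formula (\ref{god}) together with Lemma \ref{stalk edge}, then chase the composite $d_0=f_1\circ g_0$ stalkwise to read off $\widetilde{\delta}$. Your extra remark about the sign in the splitting $\F^1_e\cong\prod_{v\in I(e)}\F_v$ is a reasonable point of care that the paper glosses over, but it does not affect the kernel or cokernel.
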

	\begin{proof}
The isomorphism $ \Gamma(C^{0}\F)  \cong \prod \limits_{{x\in \V \cup \E}}\F_{x} $ is the defining identity (\ref{god}). 
By Lemma \ref{stalk edge} we have $$\Gamma(C^{1}\F):=\prod\limits_{x\in \V\cup \E}\F^1_{x}=\prod\limits_{e\in \E}\F^1_{e}\cong \prod_{e\in \E}\prod_{i=1}^{k}\F_{v_{i}}.$$
The formula for $\tilde{\delta}$ is obtained by chasing through definition \ref{bodrymap}.
\end{proof}

\begin{prop}\label{change}
Let $\F$ be a GKM-sheaf and let $ \E^{nd}  \subseteq \E$ be the finite set of hyperedges for which $res_e$ is not an isomorphism. Then there is an exact sequence  
\begin{equation}\label{freesequence}
0 \rightarrow H^{0}(\F) \rightarrow \bigoplus\limits_{x\in {\E^{nd}}}{{\cal F}_{x}}\xrightarrow{\beta} \bigoplus\limits_{e\in {\E^{nd}}}\bigoplus\limits_{v\in I(e)}\F_{v} \rightarrow H^{1}(\F) \rightarrow 0
\end{equation}
where $\beta$ is the morphism of finitely generated free $R$-modules defined by $$\beta(s)_{(e,v)}=res_{(e,v)}(s_{e})-s_{v}.$$ 
\end{prop}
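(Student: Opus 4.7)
The plan is to take the chain complex from Proposition \ref{last prop},
\[
C^0 := \prod_{x\in \V\cup\E}\F_x \xrightarrow{\tilde\delta} C^1 := \prod_{e\in\E}\prod_{v\in I(e)}\F_v
\]
(whose cohomology is $H^{\bullet}(\F)$ by that proposition), and reduce it to a quasi-isomorphic but much smaller complex by discarding the data attached to the ``good'' hyperedges $\E^d := \E\setminus\E^{nd}$ on which $res_e$ is already an isomorphism. Concretely, the reduction will be performed by quotienting out an acyclic subcomplex.

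The key construction is the subcomplex $D^\bullet \subseteq C^\bullet$ defined by $D^0 := \prod_{e\in\E^d}\F_e$ (embedded in $C^0$ as the factors indexed by $\E^d$, with zero in every other factor) and $D^1 := \prod_{e\in\E^d}\prod_{v\in I(e)}\F_v$. First I would verify that $\tilde\delta(D^0)\subseteq D^1$: given $s\in D^0$, one has $s_v=0$ for every $v\in\V$ and $s_{e'}=0$ for every $e'\in\E^{nd}$, so the formula $\tilde\delta(s)_{(e',v)} = res_{(e',v)}(s_{e'}) - s_v$ from Proposition \ref{last prop} yields zero on every $\E^{nd}$-component. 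Restricted to $D^\bullet$, the differential becomes the product $\prod_{e\in\E^d}res_e$, which is an isomorphism by the very definition of $\E^d$; hence $D^\bullet$ is acyclic.

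The long exact sequence associated to $0\to D^\bullet\to C^\bullet \to C^\bullet/D^\bullet \to 0$ then forces $H^n(C^\bullet/D^\bullet)\cong H^n(C^\bullet)=H^n(\F)$ for $n=0,1$. A direct inspection identifies the quotient complex as
\[
\bigoplus_{x\in \V\cup\E^{nd}}\F_x \xrightarrow{\beta} \bigoplus_{e\in\E^{nd}}\bigoplus_{v\in I(e)}\F_v,
\]
with induced differential $\beta(s)_{(e,v)} = res_{(e,v)}(s_e)-s_v$; the products collapse to direct sums because $\V$ is finite by definition of a GKM-hypergraph and $\E^{nd}$ is finite by Definition \ref{gkm sheaf}(3). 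Local freeness of $\F$ then ensures each summand is a finitely generated free $R$-module, and the induced morphism $\beta$ is $R$-linear and grading-preserving. The one genuine subtlety is the verification that $D^\bullet$ is actually a subcomplex, that is, that the $\E^{nd}$-components of $\tilde\delta$ vanish on $D^0$; once that is done, the rest reduces to a formal diagram chase.
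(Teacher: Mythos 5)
Your argument is correct and is essentially the paper's own proof in different packaging: the commuting diagram with exact rows to which the paper applies the Snake Lemma is exactly your short exact sequence of two-term complexes $0\to D^\bullet\to C^\bullet\to C^\bullet/D^\bullet\to 0$, and the isomorphism $\gamma$ appearing there is your observation that the differential of $D^\bullet$ restricts to $\prod_{e\in\E^{d}}res_e$, which is invertible by the definition of $\E^{d}$. One discrepancy deserves emphasis rather than being passed over silently: your quotient complex has $\bigoplus_{x\in\V\cup\E^{nd}}\F_x$ in degree zero, whereas the displayed statement (and the third term of the top row of the paper's own diagram) indexes only over $\E^{nd}$. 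Your version is the correct one. Indeed, the formula $\beta(s)_{(e,v)}=res_{(e,v)}(s_e)-s_v$ only parses if $s$ carries vertex components, the cokernel of the inclusion $\prod_{x\in\E^{d}}\F_x\hookrightarrow\prod_{x\in\V\cup\E}\F_x$ genuinely retains the factors $\F_v$, and in the degenerate case $\E^{nd}=\emptyset$ the printed sequence would force $H^0(\F)=0$, which already fails for the GKM-sheaf of a point. So you should state explicitly that you have proved the proposition with middle term $\bigoplus_{x\in\V\cup\E^{nd}}\F_x$; since $\V$ is finite and the stalks $\F_v$ are finitely generated free, this corrected middle term is still a finitely generated free $R$-module and the subsequent application in Corollary \ref{reflexive prop} goes through unchanged.
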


\begin{proof}

	Let $\mathcal{E}^d := \mathcal{E} \setminus \mathcal{E}^{nd}$. We have a commuting diagram of $R$-modules with exact rows
	\[
	\xymatrix@C+1em@R+1em{ 
		0 \ar[r] &\prod\limits_{x\in {\E^{d}}}{{\cal F}_{x}} \ar[r]^{\psi} \ar^{\gamma}[d] & \prod\limits_{x\in {\V\cup \E}}{{\cal F}_{x}} \ar[r]^{\phi} \ar^{\widetilde{\delta}}[d] & \bigoplus\limits_{x\in {\E^{nd}}}{{\cal F}_{x}} \ar[r] \ar^{\beta}[d]  & 0\\
		0 \ar[r] & \prod\limits_{e\in {\E^{d}}}\prod\limits_{v\in I(e)}\F_{v} \ar[r]^{\psi^{\prime}} & \prod\limits_{e\in {\E}}\prod\limits_{v\in I(e)}\F_{v} \ar[r]^{\phi^{\prime}} & \bigoplus\limits_{e\in {\E^{nd}}}\bigoplus\limits_{v\in I(e)}\F_{v}\ar[r] &  0 .
	}
	\]
	Where $\phi$, $\phi^{\prime}$ are projections, $\psi$, $\psi^{\prime}$ are inclusions, and $\gamma$ is defined by commutativity.  
	 By the Snake Lemma there is an exact sequence 
	\begin{equation*}
	0\rightarrow\ker{\gamma}\xrightarrow{{\overline{\psi}}} \ker(\widetilde{\delta})\xrightarrow{{\overline{\phi}}}\ker(\beta)\rightarrow coker(\gamma)\xrightarrow{{\overline{\psi^{\prime}}}} coker(\widetilde{\delta})\xrightarrow{{\overline{\phi^{\prime}}}}coker(\beta)\rightarrow 0.
	\end{equation*}
	It is clear by definition of $\E^{d}$ that $\gamma$ is an isomorphism. Thus, 
	\begin{equation*}
	0\rightarrow0\rightarrow \ker(\widetilde{\delta})\xrightarrow{{\overline{\phi}}}\ker(\beta)\rightarrow 0\rightarrow coker(\widetilde{\delta})\xrightarrow{{\overline{\phi^{\prime}}}}coker(\beta)\rightarrow 0
	\end{equation*}
	is exact so  ${\overline{\phi}}$ and ${\overline{\phi^{\prime}}}$ are isomorphisms. Compare Proposition \ref{last prop}.
\end{proof}

\begin{cor}\label{reflexive prop}
	If $\F$ is a GKM-sheaf, then $H^{0}(\F)$ is reflexive.
\end{cor}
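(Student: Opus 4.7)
The plan is to read off the conclusion directly from the exact sequence furnished by Proposition \ref{change} together with the characterization of reflexive modules as $2$-syzygies recorded in Section \ref{sec:syzygy}. Recall that a finitely generated $R$-module $M$ is reflexive precisely when there exists an exact sequence
\begin{equation*}
0 \to M \to F^1 \to F^2
\end{equation*}
with $F^1, F^2$ finitely generated free $R$-modules. So the task reduces to producing such a presentation of $H^0(\F)$.

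First I would invoke Proposition \ref{change}, which gives an exact sequence
\begin{equation*}
0 \rightarrow H^{0}(\F) \rightarrow \bigoplus\limits_{x\in {\E^{nd}}}{{\cal F}_{x}}\xrightarrow{\beta} \bigoplus\limits_{e\in {\E^{nd}}}\bigoplus\limits_{v\in I(e)}\F_{v}.
\end{equation*}
The middle and right-hand terms are exactly the candidates for $F^1$ and $F^2$. Next I would check that both are finitely generated and free: the indexing set $\E^{nd}$ is finite by condition (3) of Definition \ref{gkm sheaf}; the vertex set $\V$, and hence each $I(e)$, is finite by definition of a GKM-hypergraph; and each stalk $\F_x$ is a finitely generated free $R$-module by condition (1) of Definition \ref{gkm sheaf}. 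Thus $F^1$ and $F^2$ are finite direct sums of finitely generated free $R$-modules, hence finitely generated and free.

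It remains to note that $H^0(\F)$ is itself finitely generated; but this is automatic, since $H^0(\F)$ embeds into the finitely generated module $F^1$ and $R = \C[x_1, \dots, x_r]$ is Noetherian. Combining the displayed exact sequence with the equivalence $\text{2-syzygy} \Leftrightarrow \text{reflexive}$ from Section \ref{sec:syzygy} yields that $H^0(\F)$ is reflexive, completing the proof. There is no genuine obstacle here: the content of the corollary is really packaged into Proposition \ref{change}, and the only thing to verify is that all the modules in sight are finitely generated and free, which is immediate from the axioms of a GKM-sheaf together with Noetherianity of $R$.
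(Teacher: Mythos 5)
Your proof is correct and follows exactly the same route as the paper: apply the exact sequence of Proposition \ref{change} and the characterization of reflexive modules as $2$-syzygies. The extra verifications you supply (finiteness of $\E^{nd}$, freeness of the stalks, Noetherianity of $R$) are details the paper leaves implicit but are all accurate.
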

\begin{proof}
We see from (\ref{freesequence}) we have an exact sequence $0 \rightarrow H^0(\F) \rightarrow F_0 \rightarrow F_1$ where $F_0$ and $F_1$ are finitely generated free modules so $H^{0}(\F)$ is a 2-syzygy, hence reflexive.
\end{proof}

	\subsection{Local Cohomology of a GKM-Sheaf}\label{tobeusedrain}
	 Let $\F$ be an GKM-sheaf over $Top(\Gamma)=\V\cup \E$. The set of vertices $\V$ is an open set and the set of edges $\E$ is a closed set, so we obtain a long exact sequence by Proposition \ref{local co}
	$$0 \rightarrow H^{0}(Top(\Gamma),\F)\rightarrow H^{0}(\V,\F)\rightarrow H^{1}_{\E}(Top(\Gamma),\F) \rightarrow H^{1}(Top(\Gamma),\F)\rightarrow \cdots.  $$
Since $\V$ is discrete, $H^{i}(\V,\F)=0$ for all $i\geq 1$ and
	\begin{equation}\label{local equation}
	0 \rightarrow H^{0}(Top(\Gamma),\F)\rightarrow H^{0}(\V,\F)\rightarrow H^{1}_{\E}(Top(\Gamma),\F) \rightarrow H^{1}(Top(\Gamma),\F)\rightarrow 0
	\end{equation}
is exact. 

\begin{lem}\label{wasclaim}
	The Godement chain complex for $H^{*}_{\E}(Top(\Gamma),\F) $ is given by 
	\begin{equation}
	0\rightarrow \prod\limits_{e\in \E}\F_{e}\xrightarrow{\prod\limits_{e\in \E} res_{e}} \prod\limits_{e\in \E}\prod\limits_{v\in I(e)}\F_{v}\rightarrow 0. 
	\end{equation}
\end{lem}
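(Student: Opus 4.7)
The plan is to unpack the definition of $\Gamma_{\E}(C^n\F)$ term by term, using the descriptions of the Godement complex already established in Proposition \ref{last prop} and Lemma \ref{stalk edge}. Recall from Definition \ref{aloeublaorcebuao} that $\Gamma_\E(C^n\F)$ consists of those global sections of $C^n\F$ whose germ vanishes at every point of the open complement $\V$.

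First, for $n=0$, Proposition \ref{last prop} identifies $\Gamma(C^0\F)\cong \prod_{x\in \V\cup \E}\F_x$. The germ at a vertex $v$ is the image under the restriction map to the smallest open neighborhood $U_v=\{v\}$, which is projection onto the $v$-component. Hence the condition $s_v=0$ for all $v\in \V$ cuts out $\Gamma_\E(C^0\F)\cong \prod_{e\in \E}\F_e$. For $n=1$, Lemma \ref{stalk edge} states that $\F^1_v=0$ for every vertex, so every global section of $C^1\F$ already has vanishing germ at each vertex; therefore $\Gamma_\E(C^1\F)=\Gamma(C^1\F)\cong \prod_{e\in \E}\prod_{v\in I(e)}\F_v$. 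For $n\geq 2$, Proposition \ref{higher co} gives $C^n\F=0$, so these terms vanish.

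Finally, I identify the differential as the restriction of $d_0=\tilde{\delta}$ from Proposition \ref{last prop}, which acts on a general section by $\tilde{\delta}(s)_{(e,v)}=res_{(e,v)}(s_e)-s_v$. On a section with $s_v=0$ for every vertex $v$, this reduces to $res_{(e,v)}(s_e)$, whose assembly over all $(e,v)$ is precisely $\prod_{e\in \E}res_e$ under the identification $\F(I(e))=\prod_{v\in I(e)}\F_v$. Assembling the three computations produces the displayed two-term complex.

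The only subtlety, rather than a genuine obstacle, is confirming that the germ of a section of $C^0\F$ at a vertex coincides with its $v$-component in the product description; this is immediate because $\{v\}$ is open in $Top(\Gamma)$, so the stalk $(C^0\F)_v=C^0\F(\{v\})=\F_v$ and the restriction from any larger open set is literal coordinate projection. With this in hand, each of the three steps above is routine.
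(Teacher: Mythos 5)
Your proof is correct and follows essentially the same route as the paper: both unpack Definition \ref{aloeublaorcebuao} using the stalk computations of Lemma \ref{stalk edge} to identify $\Gamma_{\E}(C^{0}\F)$ and $\Gamma_{\E}(C^{1}\F)$, and identify the differential as the restriction of $\tilde{\delta}$. Your version is somewhat more explicit than the paper's (which dismisses the boundary map as ``the natural one''), but there is no substantive difference in approach.
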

\begin{proof}
Applying Definition \ref{aloeublaorcebuao} and Lemma \ref{stalk edge} we have
\begin{eqnarray*}
\Gamma_{\E}(C^{0}\F)& = &\{s\in \Gamma (C^{0}\F)\:|\:\:s_{v}=0, \forall v\in \V \}=\prod\limits_{e\in \E}\F_{e}\\
\Gamma_{\E}(C^{1}\F) &= & \{s\in \Gamma (C^{1}\F)\:|\:\:s_{v}=0, \forall v\in \V \}=\prod\limits_{e\in \E}\prod\limits_{v\in I(e)}\F_{v}.
\end{eqnarray*}	
and the boundary map $ \Gamma_{\E}(C^{0}\F)\rightarrow \Gamma_{\E}(C^{1}\F)$ is the natural one. 
\end{proof}

\begin{prop}\label{local e}
	 If $\Gamma$ is a GKM-hypergraph and $\F$ is a GKM-sheaf on $Top(\Gamma)$, then $$H^{0}(\V,\F)\cong \bigoplus\limits_{v\in \V}\F_{v} \text{, and}$$ $$H^{1}_{\E}(Top(\Gamma),\F)\cong \bigoplus\limits_{e\in \E^{nd}}\text{coker}( res_e). $$ 
\end{prop}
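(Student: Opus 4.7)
Both identities are essentially direct unpackings of the Godement chain complex descriptions already assembled in this section, combined with standard facts about sheaves on discrete spaces and about cokernels of products of module maps. The plan is to handle the two isomorphisms separately.

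First, I would establish the identity for $H^{0}(\V,\F)$ by observing that the subspace $\V \subseteq Top(\Gamma)$ is discrete, since each $U_v = \{v\}$ is a basic open set. By the sheaf axiom applied to the disjoint union $\V = \bigsqcup_{v \in \V} \{v\}$, together with $\F(U_v) = \F_v$, one obtains $\Gamma(\V, C^{0}\F) = \prod_{v \in \V} \F_v$. Moreover, as noted in (\ref{vstalkszero}), $(\F^{1})_v = 0$ for every vertex, so $\Gamma(\V, C^{1}\F) = 0$ and the Godement complex on $\V$ collapses to a single term in degree zero. Using the finiteness of $\V$ built into the definition of a GKM-hypergraph, the product coincides with the direct sum $\bigoplus_{v \in \V} \F_v$, yielding the first claim.

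Second, for the local cohomology $H^{1}_{\E}(Top(\Gamma),\F)$, I would start directly from the two-term complex in Lemma \ref{wasclaim}, so that $H^{1}_{\E}(Top(\Gamma),\F)$ is the cokernel of the map $\prod_{e \in \E} res_e \colon \prod_{e \in \E} \F_e \to \prod_{e \in \E} \prod_{v \in I(e)} \F_v$. Since products of $R$-modules are exact, this cokernel is naturally isomorphic to $\prod_{e \in \E} \text{coker}(res_e)$. By axiom (3) of Definition \ref{gkm sheaf}, $res_e$ is an isomorphism outside the finite set $\E^{nd}$, so $\text{coker}(res_e) = 0$ for $e \notin \E^{nd}$, and the product collapses to the finite direct sum $\bigoplus_{e \in \E^{nd}} \text{coker}(res_e)$.

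I do not anticipate a significant obstacle in either part, since both amount to formal unpackings of the definitions. The one step that deserves a little care is the interchange of cokernel with product used in the second identity; this follows from the exactness of arbitrary products in the category of $R$-modules, or equivalently from the elementary observation that a tuple $(b_e) \in \prod_e B_e$ lies in $\prod_e \im(res_e)$ if and only if each component $b_e$ does. Once that is in hand, the finiteness of $\E^{nd}$ coming from the GKM-sheaf definition converts the residual product into a direct sum and completes the argument.
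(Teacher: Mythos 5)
Your proposal is correct and follows essentially the same route as the paper: discreteness of $\V$ for the first isomorphism, and Lemma \ref{wasclaim} together with the commutation of cokernel with products and the finiteness of $\E^{nd}$ from Definition \ref{gkm sheaf} for the second. The extra care you take in justifying the interchange of cokernel and product is a welcome elaboration of a step the paper passes over silently, but it does not change the argument.
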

\begin{proof}
	Since $\V$ is discrete, $H^{0}(\V)=\prod\limits_{v\in \V}\F(\{v\})=\bigoplus\limits_{v\in \V}\F_{v}$.  
By Lemma \ref{wasclaim} we get
$$	H^{1}_{\E}(Top(\Gamma),\F) = \text{ coker}(\prod\limits_{e\in \E} res_{e})   =\prod_{e\in \E^{nd}} \text{ coker}( res_e).$$

By Definition \ref{gkm sheaf},  $\text{ coker}( res_e)\neq 0$ only for  $e$ in a finite subset $ \mathcal{E}^{nd} \subseteq \mathcal{E}$ so $$\prod_{e\in \E^{nd}} \text{ coker}( res_e) \cong \bigoplus\limits_{e\in \E^{nd}} \text{coker}( res_e).$$ 

	\end{proof}

\section{Geometric meaning of GKM-sheaf cohomology}\label{GMofC}

\begin{thm}
	\label{reflexive}
	Let X be a finite $T$-CW complex. Then $${H^{*}_{T}}(X)\cong H^{0}(\F_{X})$$ if and only if ${H^{*}_{T}}(X)$ is reflexive. 
\end{thm}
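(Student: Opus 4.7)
The plan is to prove this as a straightforward consequence of three ingredients already assembled in the paper: Proposition \ref{fitting}, the Chang--Skjelbred sequence from Theorem \ref{sys thrm}, and Corollary \ref{reflexive prop}. The two implications use essentially disjoint parts of this toolkit, so I would split the proof into the two directions.

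For the forward direction (reflexive $\Rightarrow$ isomorphism), this is really just the Corollary already stated after Proposition \ref{fitting}. I would write: if $H^*_T(X)$ is reflexive, then by Theorem \ref{sys thrm} the Chang--Skjelbred sequence (\ref{chang sequence}) is exact, so the natural map $H^*_T(X) \to H^*_T(X_0)$ identifies $H^*_T(X)$ with $\ker(\delta)$. On the other hand, Proposition \ref{fitting} identifies $H^0(\F_X)$ with the same kernel $\ker(\delta)$ via a natural map. Chasing the naturality shows these identifications are compatible, giving $H^*_T(X) \cong H^0(\F_X)$.

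For the reverse direction (isomorphism $\Rightarrow$ reflexive), I would invoke Corollary \ref{reflexive prop}, which states that $H^0(\F)$ is reflexive for \emph{every} GKM-sheaf $\F$ (not just those of the form $\F_X$), since $H^0(\F)$ embeds into a finitely generated free $R$-module as the kernel of a map of free modules in sequence (\ref{freesequence}). Reflexivity is clearly preserved under $R$-module isomorphism, so $H^*_T(X) \cong H^0(\F_X)$ immediately forces $H^*_T(X)$ to be reflexive.

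There is essentially no obstacle here — the theorem is a repackaging of results already proven. The only mild subtlety is ensuring the isomorphism $H^*_T(X) \cong H^0(\F_X)$ in the forward direction comes from a \emph{natural} map (so the statement has content beyond mere abstract isomorphism), which is handled by using the same map $r$ appearing in Proposition \ref{fitting} throughout.
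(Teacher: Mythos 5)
Your proposal is correct and follows exactly the paper's own argument: one direction combines Proposition \ref{fitting} with the exactness of the Chang--Skjelbred sequence from Theorem \ref{sys thrm}, and the other is an immediate application of Corollary \ref{reflexive prop}. No issues.
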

\begin{proof}
Suppose ${H^{*}_{T}}(X)\cong H^{0}(\F_{X}).$ By Corollary \ref{reflexive prop}, we conclude that $H_{T}^{*}(X) $ is reflexive. 

Conversely, suppose that ${H^{*}_{T}}(X)$ is reflexive. By Theorem \ref{sys thrm}, the Chang-Skjelbred sequence $$0\rightarrow H_T^*(X)\rightarrow H_T^*(X_0) \xrightarrow{\delta} H_T^{*+1}(X_1,X_0)$$ is exact, so ${H^{*}_{T}}(X)\cong ker(\delta)$ and $ker(\delta) \cong H^{0}(\F_{X})$ by Proposition \ref{fitting}. 
\end{proof}

	\begin{lem} 
		\label{phi}
	Let $X^{\prime}_{1}\subseteq X_1$ be the union of path components that do not intersect $X_0$. Suppose $H^*_T(X)$ is torsion free. Then $X^{\prime}_{1}=\emptyset.$
\end{lem}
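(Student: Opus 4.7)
The plan is to derive a contradiction from the assumption that $X_1' \neq \emptyset$ by isolating any hypothetical component as a clopen piece of some $X^H$ and showing that its equivariant cohomology would be simultaneously free and torsion, hence zero.

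First I would establish the decomposition $X_1 \setminus X_0 = \bigsqcup_H (X^H \setminus X^T)$, a disjoint union over the codimension-one subtori $H \subset T$. Every point of orbit-dimension exactly $1$ has stabilizer whose identity component is some codimension-one subtorus $H$, so it sits in $X^H \setminus X^T$; and any two distinct codimension-one subtori $H \neq H'$ generate all of $T$, so their fixed sets meet only along $X^T$. Given a connected component $C$ of $X_1$ disjoint from $X_0$, connectedness forces $C$ into a single $X^H \setminus X^T$; chasing inclusions shows $C$ is actually a full connected component of $X^H$ (a larger component of $X^H$ containing $C$ and meeting $X^T$ would yield a connected subset of $X_1$ meeting both $C$ and $X_0$, violating the maximality of $C$ as a component of $X_1$). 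Letting $Y_H$ denote the union of those components of $X^H$ missing $X^T$, this gives $X_1' = \bigsqcup_H Y_H$, with each $Y_H$ clopen in $X^H$.

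From clopenness we obtain a direct sum decomposition $H^*_T(X^H) \cong H^*_T(Y_H) \oplus H^*_T(X^H \setminus Y_H)$. The torsion-free hypothesis on $H^*_T(X)$ together with Lemma \ref{lemma local} forces $H^*_T(X^H)$ to be free, so the summand $H^*_T(Y_H)$ is free as well. On the other hand, $Y_H^T = \emptyset$, so Theorem \ref{locthm} (Borel Localization) forces $H^*_T(Y_H)$ to be a torsion $R$-module. A finitely generated $R$-module that is both free and torsion vanishes; in particular $H^0_T(Y_H) = 0$, which is incompatible with $Y_H$ having any connected components. Hence $Y_H = \emptyset$ for every codimension-one subtorus $H$, and so $X_1' = \emptyset$.

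The main obstacle I anticipate is the bookkeeping in the first paragraph of the argument — namely verifying that a connected component of $X_1$ disjoint from $X_0$ is genuinely a full connected component of a single $X^H$, and not merely contained in one. Once that is in hand, the conclusion is a short sequence of applications of Lemma \ref{lemma local} and the Localization Theorem.
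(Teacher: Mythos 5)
Your proposal is correct and follows essentially the same route as the paper: decompose $X_1$ as the union of the $X^H$ over codimension-one subtori, invoke Lemma \ref{lemma local} to get freeness of $H^*_T(X^H)$ from torsion-freeness of $H^*_T(X)$, and use the Localization Theorem \ref{locthm} to rule out components of $X^H$ missing $X^T$. The paper states the key step in one line (``every path component of $X^H$ must intersect $X_0$''); you have simply filled in the topological bookkeeping and the ``free and torsion implies zero'' argument that the paper leaves implicit.
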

\begin{proof}
	Observe that $X_{1}$ can be written as follows: 
	\begin{equation*}
	X_{1}=\bigcup_{H\leqslant T}X^{H}
	\end{equation*}
where the union is indexed by codimension one subtori $H$. Since $H_{T}^{\ast}(X)$ is torsion free, $H_{T}^{\ast}(X^{H})$ is free by Lemma \ref{lemma local}. By the Localization theorem \ref{locthm}, every path component of $X^{H}$ must intersect $X_{0}$ so $X^{H} \cap X^{\prime}_{1} = \emptyset$. We conclude $X^{\prime}_{1}=\emptyset.$
		\end{proof}
	\begin{lem}
		\label{main lemma}
		Suppose $H_T^{*}(X)$ is torsion free.\:Then $H^{0}(\V,\F_{X})\cong H_{T}^{\ast}(X_{0})$ and\\ 
		$H^{1}_{\E}(Top(\Gamma), \F_{X})\cong H_T^{*+1}(X_1,X_0)$.
	\end{lem}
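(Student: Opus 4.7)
The plan is to derive both isomorphisms from the computation of local cohomology in Proposition \ref{local e} by identifying the stalks of $\F_X$ and the cokernels of the restriction maps with equivariant cohomology groups. The torsion-freeness hypothesis enters through Lemma \ref{lemma local} and Lemma \ref{phi}, which together ensure that the torsion cleanup built into the definition of $\F_X$ is trivial and that $X_1' = \emptyset$.

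First I would prove the easy half. By Proposition \ref{local e}, $H^0(\V,\F_X) \cong \bigoplus_{v \in \V} (\F_X)_v$. By the defining stalk formula $(\F_X)_v = H^*_T(v)$ and the identification $\V = \pi_0(X^T)$, this direct sum is naturally $H^*_T(X^T) = H^*_T(X_0)$.

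For the second isomorphism, Proposition \ref{local e} gives $H^1_\E(Top(\Gamma),\F_X) \cong \bigoplus_{e \in \E^{nd}} \operatorname{coker}(\operatorname{res}_e)$. The key step is to identify $\operatorname{coker}(\operatorname{res}_e)$ with $H^{*+1}_T(e,e^T)$ when $H^*_T(X)$ is torsion free. Each hyperedge $e$ is a path component of $X^{\ker(\alpha)}$ for some codimension one subtorus $\ker(\alpha)$; by Lemma \ref{lemma local}, $H^*_T(X^{\ker(\alpha)})$ is free, hence so is the summand $H^*_T(e)$. In particular the torsion submodule $t$ vanishes, so $(\F_X)_e = H^*_T(e)/t = H^*_T(e)$ and $\operatorname{res}_e$ is the ordinary restriction $H^*_T(e) \to H^*_T(e^T)$. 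Since $H^*_T(e)$ is free, the Localization Theorem \ref{locthm} applied to the $T$-space $e$ shows $H^{*+1}_T(e) \to H^{*+1}_T(e^T)$ is injective. The long exact sequence of the pair $(e,e^T)$,
\begin{equation*}
H^*_T(e) \xrightarrow{\operatorname{res}_e} H^*_T(e^T) \to H^{*+1}_T(e,e^T) \to H^{*+1}_T(e),
\end{equation*}
then yields $\operatorname{coker}(\operatorname{res}_e) \cong H^{*+1}_T(e,e^T)$.

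To finish, note that for $e \in \E^d := \E \setminus \E^{nd}$, $\operatorname{res}_e$ is an isomorphism so $H^{*+1}_T(e,e^T) = 0$, which lets me extend the sum to all of $\E$:
\begin{equation*}
H^1_\E(Top(\Gamma),\F_X) \cong \bigoplus_{e \in \E} H^{*+1}_T(e,e^T).
\end{equation*}
Applying Lemma \ref{decompose lemma} together with Lemma \ref{phi} (which gives $X_1' = \emptyset$ under the torsion-free hypothesis) yields $\bigoplus_{e \in \E} H^{*+1}_T(e,e^T) \cong H^{*+1}_T(X_1,X_0)$, completing the proof. The main obstacle is the identification of $\operatorname{coker}(\operatorname{res}_e)$ with $H^{*+1}_T(e,e^T)$, because it requires simultaneously knowing that the torsion in $H^*_T(e)$ vanishes (so $(\F_X)_e$ agrees with $H^*_T(e)$) and that the restriction into $H^*_T(e^T)$ sits in a long exact sequence whose next term is injective; both rely critically on the hypothesis that $H^*_T(X)$ is torsion free.
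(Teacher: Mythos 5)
Your proof is correct and follows essentially the same route as the paper: Proposition \ref{local e} for both computations, Lemma \ref{lemma local} to kill the torsion in $H^*_T(e)$ so that $(\F_X)_e = H^*_T(e)$, the Localization Theorem plus the long exact sequence of $(e,e^T)$ to identify $\mathrm{coker}(res_e)$ with $H^{*+1}_T(e,e^T)$, and Lemmas \ref{decompose lemma} and \ref{phi} to assemble the sum over hyperedges into $H^{*+1}_T(X_1,X_0)$. Your explicit remark that $H^{*+1}_T(e,e^T)=0$ for $e\in\E^d$, allowing the sum to be extended from $\E^{nd}$ to $\E$, is a small detail the paper leaves implicit.
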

\begin{proof}
The vertices $v \in \V$ corresponds  path components of $X_{0}$, so 
\begin{equation*}
H^{0}(\V,\F)=\bigoplus\limits_{v\in \V}\F_{v}=\bigoplus\limits_{v\in \V}H_{T}^{\ast}(v)=H_{T}^{\ast}(X_{0}).
\end{equation*}
The hyperedges $e \in \E$ for which $a(e) = \alpha$ correspond to path components of $X^{\ker(\alpha)}$ that intersect non-trivially with $X_{0}$. Combine Lemma \ref{decompose lemma} with Lemma \ref{phi} to get
\begin{equation}\label{1}
H_T^{*}(X_1,X_0)\cong \bigoplus_{e\in \E}H_T^{*}(e,e^{T}).
\end{equation}

\begin{claim}If $H_T^{*}(X)$ is torsion free, then $\F_{X}(U_{e})\cong H_T^{*}(e).$
\end{claim}

\begin{proof}
Recall that by definition $\F_{X}(U_{e}):=H_T^*(e)/t$ where $t$ is the torsion submodule so it is enough to show $H_T^*(e)$ is torsion free for all $e \in \mathcal{E}$ .  Since $H_T^{*}(X)$ is a submodule of finitely generated free $R$-module, it is torsion free. Apply Lemma \ref{lemma local}.
\end{proof}

	The restriction morphism $res_{e}: \F(U_{e})\rightarrow \F(I(e))$ is identical with the natural map $H_T^*(e)\rightarrow H_T^*(e^{T})$ which  is injective by the Localization Theorem \ref{locthm}. The long exact sequence for the pair $(e, e^{T})$ implies
	\begin{equation*} 
	H_T^{*+1}(e,e^{T})= \text{coker}(res_e). 
	\end{equation*}
Applying Proposition \ref{local e}, we have
	\begin{equation*}
	H^{1}_{\E}(Top(\Gamma),\F) = \bigoplus_{e\in \E^{nd}} H_T^{*+1}(e,e^{T}).
	\end{equation*} 
	Combining with (\ref{1}), we conclude
	\begin{equation*}
	H^{1}_{\E}(Top(\Gamma),\F) \cong H_T^{*+1}(X_1,X_0).
	\end{equation*}

\end{proof}

\begin{proof}[Proof of Theorem \ref{bigthm}]
	Since $H_T^{*}(X)$ is reflexive, Theorem \ref{reflexive} implies that
	\begin{equation*}
	0\rightarrow H^{0}(\F_{X})\rightarrow H_T^*(X_0) \xrightarrow{\delta} H_T^{*+1}(X_1,X_0)
	\end{equation*} 
	is exact. From Lemma \ref{main lemma} and (\ref{local equation}) we have an isomorphism of exact sequences.
\[
\xymatrix@C+1em@R+1em{ 
	0 \ar[r] & H_T^*(X) \ar[r] \ar^{\cong}[d] & H_T^*(X_0) \ar^{\delta}[r] \ar^{\cong}[d] & H_T^{*+1}(X_1,X_0) \ar[r] \ar^{\cong}[d] &  coker(\delta) \ar[r] \ar^{\cong}[d] & 0\\
	0 \ar[r] & H^{0}(\F_{X}) \ar[r] & H^{0}(\V,\F_X)  \ar[r] & H^{1}_{\E}(Top(\Gamma),\F_X) \ar[r] & H^{1}(\F_{X}) \ar[r] &  0 
}
\]

\end{proof}


\begin{thebibliography}{} 
 	
 	\bibitem{AFP}C. Allday, M. Franz, V. Puppe, \textit{Equivariant cohomology, syzygies and orbit structure}, Trans.
 	Amer. Math. Soc. Volume 366, Number 12, December 2014, Pages 6567-6589.
 	
	 \bibitem{TB} T. Baird, \textit{GKM-sheaves and nonorientable surface group representations}, Journal of Symplectic Geometry, Volume 12, Number 4 (2014), 867-921.

	 \bibitem{BM}
T. Braden and R. MacPherson,\textit{ From moment graphs to intersection cohomology}, Mathematische
	 Annalen 321 (2001), no. 3, 533–551.
	 
	 \bibitem{FP} M. Franz and V. Puppe, \textit{Exact cohomology sequences with integral coefficients for torus action}, Trans. Groups 11(2006), 65-76.

\bibitem{GKM} M. Goresky, R. Kottwitz, and R. MacPherson, \textit{Equivariant cohomology, Koszul duality,
	and the localization theorem}, Inventiones Mathematicae 131 (1997), no. 1, 25–83.

\bibitem{GH} V. Guillemin, T. S. Holm. "GKM theory for torus actions with nonisolated fixed points." International Mathematics Research Notices 2004.40: 2105-2124, 2004

\bibitem{GZ1} V. Guillemin and C. Zara, \textit{Equivariant de Rham theory and graphs}, Surveys in differential
geometry, vol. VII, Int. Press, Somerville, MA, 2000.

\bibitem{GZ2} V. Guillemin and C. Zara, \textit{1-skeleta, Betti numbers, and equivariant cohomology}, Duke Mathematical Journal
107 (2001), no. 2, 283–350.

\bibitem{GZ3} V. Guillemin and C. Zara,\textit{ The existence of generating families for the cohomology ring of a graph}, Advances
in Mathematics 174 (2003), no. 1, 115–153.


\bibitem{I} B. Iverson, \textit{Cohomology of sheaves}, Springer-Verlag, 1986.


\end{thebibliography}
\end{document}